\documentclass[12pt]{article}
\usepackage{graphicx}
\usepackage{amsmath}
\usepackage{amssymb}
\usepackage{theorem}

   \usepackage{hyperref}

\sloppy
\pagestyle{plain}

\numberwithin{equation}{section}

\textheight=8.5in
\textwidth=6.0in
\addtolength{\topmargin}{-.3in}
\addtolength{\oddsidemargin}{-.25in}

\newtheorem{thm}{Theorem}[section]
\newtheorem{lemma}[thm]{Lemma}
\newtheorem{prop}[thm]{Proposition}
\newtheorem{cor}[thm]{Corollary}
{\theorembodyfont{\rmfamily}
\newtheorem{defn}[thm]{Definition}
\newtheorem{example}[thm]{Example}

\newtheorem{rmk}[thm]{Remark}
}

\newcommand{\qed}{\hfill \mbox{\raggedright \rule{.07in}{.1in}}}
 
\newenvironment{proof}{\vspace{1ex}\noindent{\bf
Proof}\hspace{0.5em}}{\hfill\qed\vspace{1ex}}

\newcommand{\R}{{\mathbb R}}
\newcommand{\T}{{\mathbb T}}
\newcommand{\Z}{{\mathbb Z}}
\newcommand{\N}{{\mathbb N}}
\newcommand{\bbA}{{\mathbb A}}
\newcommand{\bC}{{\overline{\mathbb C}}}

\newcommand{\cA}{\mathcal{A}}

\newcommand{\cH}{\mathcal{H}}

\newcommand{\cP}{\mathcal{P}}

\newcommand{\eps}{\epsilon}
\newcommand{\Leb}{\operatorname{Leb}}
\newcommand{\BV}{\operatorname{BV}}
\newcommand{\diam}{\operatorname{diam}}
\newcommand{\dist}{\operatorname{dist}}

\title{Global-local mixing for \\ infinite measure dynamical systems}

\author{
  Douglas Coates
  \thanks{Instituto de Matemáticas, Universidade Federal do Rio de Janeiro, Rio de Janeiro, Brazil}
  \and
  Ian Melbourne
  \thanks{Mathematics Institute, University of Warwick, Coventry CV4 7AL, United Kingdom}
}

\date{10 May 2025, updated 20 December 2025}

\begin{document}

\maketitle

 \begin{abstract}
We prove global-local mixing for a large class of dynamical systems with infinite invariant measure. In particular, we treat
intermittent maps including maps with multiple neutral fixed points, nonMarkovian intermittent maps, and multidimensional nonMarkovian intermittent maps.
We also prove global-local mixing for parabolic rational maps of the complex plane.
 \end{abstract}

\section{Introduction}

There has been a significant surge of interest recently in the ergodic theory of dynamical system preserving an infinite $\sigma$-finite measure~\cite{Aaronson}.
A remark in~\cite[p.~75]{Aaronson} suggests that there is no reasonable notion of mixing for infinite measure systems. Nevertheless, there are at least two notions of mixing that have proved fruitful: \emph{Krickeberg mixing}~\cite{Krickeberg67} and more recently \emph{global-local mixing} introduced by Lenci~\cite{Lenci10}.

Let $(X,\mu)$ be an infinite, $\sigma$-finite measure space.  We suppose that 
$f:X\to X$ is a conservative, exact, measure-preserving transformation.
The usual mixing property $\lim_{n\to\infty}\mu(A\cap f^{-n}B)=\mu(A)\mu(B)$ for all (finite measure) measurable sets $A$, $B$ fails.
Moreover, by~\cite{HajianKakutani64} there always exists a measurable set $W$
with $0<\mu(W)<\infty$ such that
$\mu(W\cap f^{-n}W)=0$ infinitely often.
The system is Krickeberg mixing if there exist constants $a_n\to\infty$ such that 
\[
\lim_{n\to\infty}a_n\int_X v\;\,w\circ f^n\,d\mu=\int_X v\,d\mu\int_X w\,d\mu
\]
for a sufficiently large class of observables $v$ and $w$.
The more recent notion of global-local mixing~\cite{Lenci10} asks that
\begin{equation} \label{eq:glocal}
\lim_{n\to\infty}\int_X v\;\,g\circ f^n\,d\mu=\bar g\,\int_X v\,d\mu
\end{equation}
for all $v\in L^1$ and a sufficiently large class of ``global'' observables $g$.

Roughly speaking, an observable $g\in L^\infty$ is \emph{global} if
there exists $\bar g\in\R$ such that
\[
\lim_{m\to\infty}\frac{\int_{Z_m} g\,d\mu}{\mu(Z_m)}= \bar g
\]
for suitable nested sequences of measurable subsets $Z_m\subset X$
	with $\mu(Z_m)<\infty$ and $\mu(X\setminus \bigcup_m Z_m)=0$.
The precise notion of global observable depends on the context and is made precise for our purposes in Definition~\ref{def:global} below.

Prototypical examples are the intermittent maps introduced in~\cite{PomeauManneville80}. Krickeberg mixing was studied under restrictive conditions in~\cite{Thaler00} and a full theory was developed by~\cite{Gouezel11,MT12,Terhesiu15,Terhesiu16}.
Likewise, global-local mixing for intermittent maps was studied under restrictive conditions in~\cite{BonannoGiuliettiLenci18,BonannoLenci21,CanestrariLenci_arxiv} and a full theory is developed in the current paper. Results on global-local mixing for other classes of dynamical systems can be found in~\cite{BonannoGiuliettiLenci18b,DolgopyatNandori22,GiuliettiHammerlindlRavotti22}.

Bonanno \& Lenci~\cite{BonannoLenci21} proved global-local mixing for certain intermittent maps $f:X\to X$ with $X=[0,1]$, including generalised classical Pomeau-Manneville maps of the form
\[
fx=x+b x^{\alpha+1}\quad\bmod 1
\]
where $\alpha\in(0,1]$ and $b\in\Z^+$, as well as generalised Liverani-Saussol-Vaienti (LSV) maps~\cite{LSV99}. The latter have finitely or infinitely many full branches where the first branch is of the form $x+b x^{\alpha+1}$ with $b>0$ and the remaining branches are linear with positive slope. They also studied perturbations of these two classes of maps. Full results were obtained for $\alpha\in(0,1)$ and a weaker result was obtained for $\alpha=1$.

One aim of the current paper is to remove various restrictions in~\cite{BonannoLenci21} thereby developing a complete theory of global-local mixing for intermittent maps:
\begin{itemize}

\parskip=-1pt
\item The examples mentioned so far have a single neutral fixed point and the previous techniques do not apply when there is more than one neutral fixed points. We allow any number of neutral fixed points. 
\item We remove the assumption that $f$ is full-branch or even Markov.
\item Full global-local mixing is proved even in the case $\alpha=1$.
\item There is a technical assumption (A5) in~\cite{BonannoLenci21} which is hard to verify and need not hold, but is unnecessary for our approach.
\end{itemize}

It is worth noting that the initial results of~\cite{Thaler00} on Krickeberg mixing for intermittent maps also required a technical assumption that was hard to verify and not always true. The approach in~\cite{MT12} eliminated the need for such an assumption, and a key step in the current paper is to apply the result of~\cite{MT12}.

\begin{rmk} For certain classes of dynamical systems, global-local mixing seems to be a more flexible property than Krickeberg mixing. Certainly, there are numerous examples in~\cite{DolgopyatNandori22} which are shown to be global-local mixing and where proving Krickeberg mixing is way beyond reach.
In contrast, a full theory of global-local mixing for intermittent maps seems to be more delicate than Krickeberg mixing: Krickeberg mixing is just one step in our approach (see condition~\eqref{eq:K} below) and further arguments are required to achieve global-local mixing.
\end{rmk}

The remainder of this paper is organised as follows.
In Section~\ref{sec:main}, we state and prove our main result, Theorem~\ref{thm:main}, in a suitable abstract setting.
In Section~\ref{sec:proof}, we prove Theorem~\ref{thm:main} by dividing into piecewise constant and piecewise mean zero observables.

In Section~\ref{sec:ex}, we apply our result to large classes of one-dimensional intermittent maps, including those in~\cite{BonannoLenci21}, but also maps with multiple neutral fixed points and nonMarkovian intermittent maps.
In addition, we treat a multidimensional nonMarkovian intermittent map studied in~\cite{EMV21} as well as parabolic rational maps of the complex plane.

\vspace{-2ex}
\paragraph{Notation}
We use ``big O'' and $\ll$ notation interchangeably, writing $a_n=O(b_n)$ or $a_n\ll b_n$
if there are constants $C>0$, $n_0\ge1$ such that
$a_n\le Cb_n$ for all $n\ge n_0$.
We write $a_n\approx b_n$ if $a_n\ll b_n$ and $b_n\ll a_n$.
As usual, $a_n=o(b_n)$ means that $a_n/b_n\to0$ and
$a_n\sim b_n$ means that $a_n/b_n\to1$.

\section{The abstract setup}
\label{sec:main}

Let $(X,\mu)$ be an infinite, $\sigma$-finite measure space.  We suppose that 
$f:X\to X$ is a conservative, exact, measure-preserving transformation.

Fix a subset $Y\subset X$ with $0<\mu(Y)<\infty$. 
By conservativity, the first hit map
\[
\tau:X\to\Z^+, \qquad \tau(x)=\inf\{n\ge1:f^nx\in Y\}
\]
is finite almost everywhere.
Set
\[
Y_n=\{y\in Y: \tau(y)=n\}, 
\qquad
X_n=\{x\in X\setminus Y:\tau(x)=n\}.
\]

\begin{prop} \label{prop:X0}
$\mu(X_n)<\infty$ for all $n\ge1$, and $\lim_{n\to\infty}\mu(X_n)=0$.
\end{prop}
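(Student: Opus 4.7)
The plan is to establish the identity
\[
\mu(X_n) = \sum_{k>n} \mu(Y_k),
\]
from which both conclusions follow at once: the right-hand side is a tail of the convergent series $\sum_{k\ge1}\mu(Y_k)=\mu(Y)<\infty$, so it is finite and tends to $0$ as $n\to\infty$.

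First I would record the input from the hypotheses on $f$. Since $f$ is conservative and measure-preserving with $\mu(Y)>0$, and exact hence ergodic, almost every $x\in X$ hits $Y$ in finite forward time. This gives the mod-$\mu$ partitions $X=Y\sqcup\bigsqcup_{n\ge1}X_n$ and $Y=\bigsqcup_{n\ge1}Y_n$, and in particular $\sum_n\mu(Y_n)=\mu(Y)<\infty$.

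Next I would use $\mu$-invariance ($\mu\circ f^{-1}=\mu$) together with two partition identities. For the base case $n=1$, the set $f^{-1}Y$ splits as $(f^{-1}Y\cap Y)\sqcup(f^{-1}Y\cap(X\setminus Y))=Y_1\sqcup X_1$, where the first equality is the definition of $Y_1$ and the second follows since a point outside $Y$ that lands in $Y$ after one step has $\tau=1$. Invariance then gives $\mu(X_1)=\mu(Y)-\mu(Y_1)$, which is finite. For the inductive step $n\ge2$, the key observation is that $f^{-1}(X_{n-1})$ partitions mod $\mu$ as $Y_n\sqcup X_n$: if $x\notin Y$ and $fx\in X_{n-1}$ then $\tau(fx)=n-1$ forces $\tau(x)=n$, hence $x\in X_n$; and if $y\in Y$ with $fy\in X_{n-1}$ then the first return of $y$ is at time $n$, hence $y\in Y_n$. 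Conversely the inclusions in the other direction are immediate from the definitions. Applying invariance gives the recursion $\mu(X_{n-1})=\mu(X_n)+\mu(Y_n)$, and telescoping with the base case yields $\mu(X_n)=\mu(X_1)-\sum_{k=2}^{n}\mu(Y_k)=\sum_{k>n}\mu(Y_k)$, as claimed.

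I do not anticipate a genuine obstacle: the argument is bookkeeping built on measure-invariance. The one place that has to be checked carefully is the inductive partition $f^{-1}(X_{n-1})=Y_n\sqcup X_n$, where one must verify that the intersection with $Y$ really exhausts $Y_n$ (not a proper subset) and similarly for the complement, but this is direct from unwinding what $fx\in X_{n-1}$ means in terms of $\tau$.
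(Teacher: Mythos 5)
Your proposal is correct and follows essentially the same approach as the paper: both rest on the decomposition $f^{-1}X_j = X_{j+1}\sqcup Y_{j+1}$ (mod $\mu$) and measure invariance, with the paper summing the resulting recursion while you telescope it to the explicit identity $\mu(X_n)=\sum_{k>n}\mu(Y_k)$. The only cosmetic difference is that the paper gets $\mu(X_n)<\infty$ directly from $X_n\subset f^{-n}Y$, whereas you propagate finiteness from the base case $\mu(X_1)=\mu(Y)-\mu(Y_1)$.
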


\begin{proof}
Note that $\mu(X_n)\le \mu(f^{-n}Y)=\mu(Y)<\infty$ by $f$-invariance of $\mu$.
Also,
\[
\sum_{j=1}^n \mu(X_j)=
\sum_{j=1}^n \mu(f^{-1}X_j)
=\sum_{j=1}^n \mu(X_{j+1})
+\sum_{j=1}^n \mu(Y_{j+1}).
\]
Hence 
\[
\mu(X_1)-\mu(X_{n+1})=\sum_{j=2}^{n+1}\mu(Y_j)\to \mu(Y)-\mu(Y_1).
\]
Similarly, $\mu(Y)=\mu(f^{-1}Y)=\mu(Y_1)+\mu(X_1)$ 
and it follows that $\mu(X_n)\to0$.
\end{proof}

We can now give the precise definitions of global observable and global-local mixing:
\begin{defn} \label{def:global}
We say that $g\in L^\infty$ is a \emph{global observable}
	if there exists $\bar g\in\R$ such that\footnote{
By Proposition~\ref{prop:X0}, this coincides with the rough definition in the Introduction, taking $Z_m=X_m\cup Y$.}
	\[
		\lim_{n\to\infty}\frac{\int_{\bigcup_{i=1}^n X_i}g\,d\mu}
		{\mu(\bigcup_{i=1}^n X_i)}=\bar g.
	\]
The global observable is \emph{centred} if $\bar g=0$.

The dynamical system $f:X\to X$ is \emph{(fully) global-local mixing} if~\eqref{eq:glocal} holds for all $v\in L^1$ and global observables $g\in L^\infty$.
\end{defn}

Note that $f^jY_{j+n}\subset X_n$ for all 
$j,n\ge1$. 
In general $f^j|_{Y_{j+n}}:Y_{j+n}\to X_n$ need not be a bijection.
We assume that there exist integers $n_0,\,q\ge1$, an indexing set $\bbA$ (finite or countably infinite), a surjection
$\psi:\bbA\to\{1\le p\le q\}$,
 and decompositions
\[
X_n=\bigcup_{p=1}^qX_{n,p}, \qquad
	Y_n=\bigcup_{p=1}^q\bigcup_{\psi(r)=p}Y_{n,r}, 
\quad n\ge n_0,
\] 
into disjoint sets
such that $f^j$ maps each $Y_{j+n,r}$ (for $n\ge n_0$) bijectively onto
$X_{n,\psi(r)}$.

\begin{rmk} \label{rmk:p}
For intermittent maps, $q$ represents the number
  of neutral fixed points and the $X_{ n,p }$ are the preimages of $Y$
  under $f^n$ that accumulate at the $p$'th fixed point $\xi_p$. The indexing set
  $\mathbb{A}$ enumerates for $1\le p\le q$ the branches of $f$ which contain $\xi_p$ in their range and not their domain, and
  $\{Y_{ n,r },\, r\in\psi^{-1}(p)\}$ is the set of preimages of $X_{n - 1,p}$ in the corresponding branches. 

In particular, for
intermittent maps with one neutral fixed point and two branches (such as the LSV map~\cite{LSV99}), we can take $q=|\bbA|=1$, see Example~\ref{ex:2branch}.

The abstract setting can be generalised in various ways. For instance, in Example~\ref{ex:EMV} we consider bijections $f^j:Y_{j,n,r}\to X_{n,p}$ where the decompositions $\bigcup_{\psi_j(r)=p}Y_{j,n,r}$ depend on $j$ and $n$ (instead of just $j+n$) with surjections $\psi_j:\bbA_j\to\{1,\dots,q\}$ depending on $j$. We prefer to emphasise the current setting since it suffices for all of the remaining examples with less onerous notation.
\end{rmk}

We assume that there are constants $\alpha\in(0,1]$
and $\gamma_p>0$, $1\le p\le q$, 
 such that
\begin{equation} \label{eq:Y}
\sum_{\psi(r)=p}\sum_{j\ge n}\mu(Y_{j,r}) \sim \gamma_p \, n^{-\alpha}
\quad\text{as $n\to\infty$ for all $1\le p\le q$.}
\end{equation}
In particular,
\begin{equation} \label{eq:tau}
\mu(y\in Y:\tau(y)\ge n) \sim \gamma n^{-\alpha}
\quad\text{as $n\to\infty$}
\end{equation}
where $\gamma=\sum_{p=1}^q\gamma_p\in(0,\infty)$.

\begin{rmk}
When $q=1$, conditions~\eqref{eq:Y} and~\eqref{eq:tau} coincide.
\end{rmk}

Our main assumption is a Krickeberg mixing condition on $f$.
Let $L:L^1(X,\mu)\to L^1(X,\mu)$ denote the transfer operator of $f$
(so $\int_X Lv\,w\,d\mu=\int_X v\,w\circ f\,d\mu$ for
$v\in L^1(X,\mu)$, $w\in L^\infty$).
We require that there is a constant $K\ge0$ such that
\begin{equation} \label{eq:K}
	\lim_{n\to\infty}a_n\big|1_Y(L^n1_Y-Ka_n^{-1})\big|_\infty =0.
\end{equation}

\begin{rmk} \label{rmk:K}
Condition~\eqref{eq:K} has been verified by~\cite{Gouezel11,MT12} in many examples (with $K>0$).
In particular, 
when $F=f^\tau:Y\to Y$ is a mixing Gibbs-Markov map or a mixing AFU map (see Section~\ref{sec:AFN}), condition~\eqref{eq:K} holds under~\eqref{eq:tau}
provided $\alpha\in(\frac12,1]$.
When $\alpha\in(0,\frac12]$, condition~\eqref{eq:K} holds if in addition $\mu(y\in Y:\tau(y)=n)=O(n^{-(\alpha+1)})$.
\end{rmk}

Let $g\in L^\infty$. We fix $1\le p\le q$ and 
note that 
\[
\sum_{\psi(r)=p}\int_{Y_{j+i,r}}g\circ f^j\,d\mu=
\sum_{\psi(r)=p}\int_{X_{i,p}}g\,d(f^j|_{Y_{j+i,r}})_*\mu=
\int_{X_{i,p}}gJ_{j,i,p}\,d\mu
\]
where 
\[
J_{j,i,p}: X_{i,p}\to(0,\infty), \qquad 
J_{j,i,p} =\sum_{\psi(r)=p}\frac{d(f^j|_{Y_{j+i,r}})_*\mu}{d\mu|_{X_{i,p}}}.
\]
Our final assumption is that there are constants $c_{j,i,p}>0$ such that
\begin{equation} \label{eq:J}
\lim_{j\to\infty}\sum_{\eps j < i < \eps^{-1}j} \sup_{X_{i,p}}|J_{j,i,p}-c_{j,i,p}|=0
\quad\text{for each $p=1,\dots,q$ and all $\eps>0$}.
\end{equation}

\begin{thm} \label{thm:main}
Assume conditions~\eqref{eq:Y},~\eqref{eq:K} and~\eqref{eq:J}.
Then $f$ is global-local mixing.
\end{thm}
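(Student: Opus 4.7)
The target identity $I_N(v,g) := \int_X v\,(g\circ f^N)\,d\mu \to \bar g\int v\,d\mu$ is linear and $\|\cdot\|_1$-continuous in $v$ with constant $\|g\|_\infty$, and $\|v - v 1_{Y\cup\bigcup_{k\le N_0}X_k}\|_1 \to 0$ as $N_0\to\infty$, so it suffices to establish the limit for $v$ supported on $Y\cup\bigcup_{k\le N_0}X_k$ for each fixed $N_0$. For $v$ supported on $X_k$, write $\int_{X_k} v\,(g\circ f^N)\,d\mu = \int L^k(v 1_{X_k})\,(g\circ f^{N-k})\,d\mu$ and note that $L^k(v 1_{X_k})$ is supported in $Y$ because $f^k(X_k)\subset Y$; combined with $\int L^k(v 1_{X_k})\,d\mu = \int v 1_{X_k}\,d\mu$, this reduces the whole question to the case $v\in L^1(Y)$, and by linearity and density, to $v = 1_B$ for measurable $B\subset Y$.

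Next, split $g = g_1 + g_2$, where $g_1 = c_{n,p'} := \mu(X_{n,p'})^{-1}\int_{X_{n,p'}}g\,d\mu$ on each $X_{n,p'}$ with $n\ge n_0$ and $g_1 = g$ elsewhere, so that $g_2 = g - g_1$ is supported on $\bigcup_{n\ge n_0,p'}X_{n,p'}$ with $\int_{X_{n,p'}}g_2\,d\mu = 0$. For each $y \in B$ the orbit $y,fy,\ldots,f^Ny$ has a last visit to $Y$, say at time $N-j$ at a point $z\in Y_{j+n,p}$ (forcing $f^N y = f^j z\in X_{n,\psi(p)}$) or else $f^Ny\in Y$ itself. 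Decomposing $I_N(1_B, g)$ accordingly converts it into a sum over $(j,n,p)$ of inner integrals $\int_{Y_{j+n,p}} g\circ f^j\,d\mu$ weighted by the masses that $L^{N-j}1_B$ assigns to $Y_{j+n,p}$, plus a remainder corresponding to $f^Ny\in Y$ which is of size $O(a_N^{-1})$ by~\eqref{eq:L} and so tends to $0$. Substituting $g_1$, each inner integral factorises as $c_{n,\psi(p)}\mu(Y_{j+n,p})$; substituting $g_2$, the inner sums over $n,p$ are precisely those controlled by the hypothesis~\eqref{eq:tilde}.

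At this stage the task has been reduced to an identity about real sequences built from $\mu(Y_{n,p})$, the averages $c_{n,p'}$, and $a_n^{-1}$: one must show that the resulting convolution limit equals $\bar g\mu(B)$, which invokes $\mu(X_{n,p'})\sim \alpha^{-1}\gamma_{p'}n^{-\alpha}$ (Proposition~\ref{prop:X}) together with the defining average in Definition~\ref{def:global}. This is the content of Lemma~\ref{lem:calc}. I expect the main obstacle to lie precisely in that lemma, and in particular in the borderline case $\alpha=1$ where $a_n=\log n$ and the renewal convolutions decay only logarithmically; the natural line of attack is to split the $j$-sum at some scale (for example $j\approx N/2$), handling the large-$j$ block via~\eqref{eq:tilde} and the small-$j$ block via~\eqref{eq:L} and~\eqref{eq:Y}. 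A secondary difficulty is the careful bookkeeping of the orbit decomposition so that the inner sums produced for the $g_2$-contribution match exactly the form required by~\eqref{eq:tilde}.
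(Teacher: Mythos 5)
Your overall strategy matches the paper's: reduce to a single reference observable $v$, split $g$ into a piecewise-constant part and a piecewise-mean-zero part, decompose the orbit by its last visit to $Y$, and defer to a calculational estimate on real sequences. However, there are two issues, one of which is a genuine gap.

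The genuine gap is in the reduction of $v$. After passing from general $v\in L^1$ to $v\in L^1(Y)$ via $L^k$, you reduce by density to $v=1_B$ for arbitrary measurable $B\subset Y$. But the orbit decomposition then produces terms $\int_{Y_{j+n,p}}(L^{N-j}1_B)\,(g\circ f^j)\,d\mu$, and to make progress you must replace $L^{N-j}1_B$ by a scalar multiple of $a_{N-j}^{-1}$; this is exactly what hypothesis~\eqref{eq:L} provides for $1_Y$ but it says \emph{nothing} about $L^n1_B$ for general $B\subset Y$. (The comparison $0\le L^n1_B\le L^n1_Y$ only gives an upper bound, which is not enough for the cancellation needed when $g$ is merely centred.) The paper avoids this entirely by invoking exactness of $f$ via \cite[Lemma~3.3(a)]{BonannoLenci21}: exactness implies $\|L^n(v-c\,1_Y)\|_1\to0$ for $c=\mu(Y)^{-1}\int_X v\,d\mu$, so it suffices to prove the limit for the single choice $v=1_Y$, which is precisely what~\eqref{eq:L} controls. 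You could repair your argument by inserting this exactness step, but as written your reduction does not close.

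Secondly, you leave the core calculational step (the paper's Lemma~\ref{lem:calc}) unproven, and your sketch of a ``natural line of attack'' for $\alpha=1$ mischaracterises it: by the time that lemma is invoked, hypotheses~\eqref{eq:Y},~\eqref{eq:L},~\eqref{eq:tilde} have already been consumed (to produce the convolution $\sum_j a_{n-j}^{-1}\sum_i \bar g_i(j+i)^{-(\alpha+1)}$ and to discard the $\tilde g$-part) and what remains is a statement purely about bounded real sequences with $\sum_{i\le n}i^{-\alpha}\bar g_i=o(a_n)$. The paper proves it by an Abel resummation (Lemma~\ref{lem:resum}) followed by careful splitting in $j$ and $i$; for $\alpha=1$ the split at $j\approx n/2$ does appear, but the estimates are driven by the resummed quantity $b_n=\sum_{i\le n}i^{-1}\bar g_i=o(\log n)$ and the elementary asymptotics $\sum_{i\ge n}(\log i)i^{-2}\approx(\log n)/n$, not by re-invoking the dynamical hypotheses. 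So the heart of the argument is missing from your plan.
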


\section{Proof of Theorem~\ref{thm:main}}
\label{sec:proof}

This section is devoted to the proof of Theorem~\ref{thm:main}.
We continue to assume that
$(X,\mu)$ is an infinite, $\sigma$-finite measure space and that 
$f:X\to X$ is a conservative, exact, measure-preserving transformation.
In proving Theorem~\ref{thm:main}, it can be supposed without loss of generality that $g$ is centred.
We let $\mu_Y=\mu|_Y$.

In Subsection~\ref{sec:global}, we give a useful characterisation of global observable.
In Subsection~\ref{sec:exact}, we mention some simplifications arising from exactness.
In Subsection~\ref{sec:K}, we show how to use the Krickeberg mixing assumption~\eqref{eq:K} to reduce the global-local mixing problem. This leads naturally to the splitting of global observables into those that are either piecewise constant or piecewise mean zero.

In Subsection~\ref{sec:pwc}, we state and prove Lemmas~\ref{lem:pwc} and~\ref{lem:pwc1}: global-local mixing for piecewise constant global observables.
In Subsection~\ref{sec:pw0}, we state and prove Lemma~\ref{lem:pw0}: global-local mixing for piecewise mean zero observables
(it is only here that the Jacobian condition~\eqref{eq:J} is used).

Theorem~\ref{thm:main} is an immediate consequence of Lemmas~\ref{lem:pwc},~\ref{lem:pwc1} and~\ref{lem:pw0}.

\subsection{Characterisation of global observables}
\label{sec:global}

\begin{prop} \label{prop:X}
Assume condition~\eqref{eq:Y}. Then
$\mu(X_{n,p})\sim \gamma_p \, n^{-\alpha}$ as $n\to\infty$ for all $1\le p\le q$.
\end{prop}

\begin{proof}
By $f$-invariance of $\mu$,
\[
\mu(X_{n-1,p})=
\mu(f^{-1}X_{n-1,p})=
\mu(X_{n,p})+\sum_{\psi(r)=p}\mu(Y_{n,r}).
\]
 By Proposition~\ref{prop:X0}, $\mu(X_{k,p})\to0$ as $k\to\infty$.
Hence
\begin{align*}
\mu(X_{n,p}) & =\lim_{k\to\infty}\big(\mu(X_{n,p})- \mu(X_{k,p})\big)
=\sum_{j\ge n+1} \big(\mu(X_{j-1,p})- \mu(X_{j,p})\big)
\\ & =
\sum_{\psi(r)=p}
\sum_{j\ge n+1} 
\mu(Y_{j,r})
\sim \gamma_pn^{-\alpha}
\end{align*}
by~\eqref{eq:Y}. 
\end{proof}

Define positive sequences $a_n$, $n\ge0$, by setting
\[
a_n=\begin{cases} n^{1-\alpha}, & 0<\alpha<1 \\
\log n, & \alpha=1
\end{cases} \;.
\]
Whenever $a_n^{-1}$ is undefined, we set $a_n=1$.

Define sequences
\[
\bar g_{n,p}= \frac{\int_{X_{n,p}}g\,d\mu}{\mu(X_{n,p})}, \quad n\ge n_0,
\qquad\text{and}\qquad
\bar g_n=\sum_{p=1}^q\gamma_p\bar g_{n,p}.
\]
The sequences are bounded since $g\in L^\infty$; indeed 
$|\bar g_{n,p}|\le |g|_\infty$ 
and $|\bar g_n|\le \gamma |g|_\infty$.

\begin{lemma} \label{lem:g}
Assume condition~\eqref{eq:Y} holds and let 
$g\in L^\infty$. Then  $g$ is a centred global observable if and only if
\begin{equation} \label{eq:g}
	\lim_{n\to\infty} a_n^{-1}\sum_{i=n_0}^n i^{-\alpha}\bar g_i=0.
\end{equation}
\end{lemma}

\begin{proof}
	By Proposition~\ref{prop:X}, $\mu(X_n)\sim \gamma n^{-\alpha}$ so there exists $\tilde\gamma>0$ such that
	$\mu\big(\bigcup_{i=n_0}^n X_i\big)\sim \tilde\gamma a_n$.
On the other hand, by definition of $\bar g_{i,p}$, $\bar g_i$ and Proposition~\ref{prop:X},
\begin{align*}
\int_{\bigcup_{i=n_0}^n X_i} g\,d\mu  & =
		\sum_{i=n_0}^n\sum_{p=1}^q\int_{X_{i,p}} g\,d\mu
		 =\sum_{i=n_0}^n\sum_{p=1}^q \bar g_{i,p}\,\mu(X_{i,p})
\\ &
 =\sum_{i=n_0}^n\sum_{p=1}^q \bar g_{i,p}\,\gamma_p\,i^{-\alpha}
+o(a_n)=
 \sum_{i=n_0}^n \bar g_i\,i^{-\alpha} +o(a_n).
\end{align*}
Hence, $g$ is a centred global observable if and only if~\eqref{eq:g} holds.
\end{proof}

\subsection{Consequences of exactness}
\label{sec:exact}

By exactness of $f$, it suffices
(see~\cite[Lemma~3.3(a)]{BonannoLenci21}) to prove that~\eqref{eq:glocal} holds for
one $v\in L^1(X,\mu)$ with $\int_X v\,d\mu\neq0$ in order to conclude 
global-local mixing.
Choosing $v=1_Y$, we reduce to showing that
\[
\lim_{n\to\infty} \int_Y g\circ f^n\,d\mu = 0
\]
for all centred global observables $g\in L^\infty$.

The following result sometimes leads to simplified combinatorics, see Example~\ref{ex:parabolic}.

\begin{prop} \label{prop:power}
If $f^m:X\to X$ is global-local mixing for some $m\ge1$, then $f$ is global-local mixing.
\end{prop}

\begin{proof}
Fix a centred global observable $g\in L^\infty$.
We claim that $g_\ell=g\circ f^\ell$ is 
a centred global observable for each $\ell\ge0$.
Then, writing $n=km+\ell$ where $k\in\N$ and $0\le \ell\le m-1$,
\[
\int_Y g\circ f^n\,d\mu
=\int_Y g_\ell\circ f^{km}\,d\mu
\qquad\text{and}\qquad
\lim_{k\to\infty}\int_Y g_\ell\circ f^{km}\,d\mu=0
\]
since $f^m$ is global-local mixing and $g_\ell$ is centred global.
Hence $f$ is global-local mixing.

It remains to prove the claim.
Notice that for
$i \ge \ell + 1$, we can write
$f^{-\ell} X_{i - \ell}$ as a disjoint union
$f^{-\ell} X_{i - \ell} = X_{i} \cup Z_{i,\ell}$ where
$Z_{i,\ell} \subset \bigcup_{ k = 0 }^{ \ell - 1 } f^{-k}Y$.
Hence, for $n \ge \ell + 1$
\[
\int_{\bigcup_{ i = 1 }^{n} X_i }  g_{\ell} \, d\mu
= \int_{\bigcup_{ i = 1}^{ \ell } X_i} g_{\ell} \, d\mu
+  \int(1_{\bigcup_{ i = \ell + 1}^{ n }X_{i - \ell}} g)\circ f^\ell \, d\mu
-  \int_{\bigcup_{ i = \ell + 1}^{ n }Z_{i,\ell}} g_{\ell} \, d\mu.
\]
Moreover,
\[
\left|\int_{\bigcup_{ i = \ell + 1}^{ n }Z_{i,\ell}} g_{\ell} \, d\mu \right|
\le
| g |_{\infty}
\mu\left( \bigcup_{ k = 0}^{ \ell - 1 } f^{-k} Y \right)
\le \ell | g |_{\infty} \mu (Y).
\]
It follows that
\begin{align*}
\int_{\bigcup_{i=1}^n X_i}g_\ell\,d\mu
= \int_{\bigcup_{i = 1}^{n - \ell }X_i} g \,d\mu+O(1)
 = \int_{\bigcup_{i=1}^n X_i}g\,d\mu+O(1).
\end{align*}
Dividing by $\mu(\bigcup_{i=1}^n X_i)$, we obtain that $g_\ell$ is a centred global observable as claimed.~
\end{proof}

\subsection{Consequences of Krickeberg mixing}
\label{sec:K}

We now show how to use condition~\eqref{eq:K} to reduce the problem of global-local mixing.

\begin{prop} \label{prop:CMT}
	The sequence 
$\sum_{j=1}^n a_{n-j}^{-1}\,j^{-\alpha}$ is convergent, with limit $\pi(\sin\alpha\pi)^{-1}$ for $\alpha\in(0,1)$ and $1$ for $\alpha=1$.
In addition, if $\lim_{n\to\infty}\delta(n)=0$, then
\[
\lim_{n\to\infty}\sum_{j=1}^n a_{n-j}^{-1}\,j^{-\alpha}\delta(j)=
\lim_{n\to\infty}\sum_{j=1}^n a_{n-j}^{-1}\,j^{-\alpha}\delta(n-j)=0.
\]
\end{prop}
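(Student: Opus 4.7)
The plan is first to establish uniform boundedness of $S_n:=\sum_{j=1}^n a_{n-j}^{-1}j^{-\alpha}$, and then to deduce the convergence of $\sum_{j=1}^n a_{n-j}^{-1}j^{-\alpha}(\delta(j)+\delta(n-j))$ to $0$ from this bound together with $\delta(n)\to 0$ via a standard $\epsilon/N$ splitting argument.

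For boundedness in the case $0<\alpha<1$, the $j=n$ term contributes $n^{-\alpha}=o(1)$, and the remaining sum
\[
\sum_{j=1}^{n-1}(n-j)^{-(1-\alpha)}j^{-\alpha}
\]
is essentially a Riemann-sum approximation (via $t=j/n$ with $\Delta t=1/n$) of the Beta integral
\[
B(\alpha,1-\alpha)=\int_0^1(1-t)^{-(1-\alpha)}t^{-\alpha}\,dt<\infty;
\]
a standard integral comparison, handling separately a few terms near the two singularities $t=0,1$, yields a uniform bound. For $\alpha=1$, the endpoint convention affects only the harmless terms $j\in\{n-1,n\}$ (each $\le n^{-1}$). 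I would split the bulk at $j=\lfloor n/2\rfloor$: for $j\le n/2$ the bound $1/\log(n-j)\ll 1/\log n$ combined with $\sum_{j\le n/2}j^{-1}\ll\log n$ gives $O(1)$; for $j>n/2$ the bound $j^{-1}\le 2/n$ together with the crude estimate $\sum_{k=2}^{n/2}(\log k)^{-1}\ll n$ again gives $O(1)$.

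For the convergence claim, fix $\epsilon>0$, choose $N$ so that $|\delta(m)|<\epsilon$ for all $m\ge N$, and set $M=\sup_m|\delta(m)|<\infty$. I treat the $\delta(j)$ and $\delta(n-j)$ contributions separately. For the $\delta(j)$ part, the tail $\sum_{j>N}a_{n-j}^{-1}j^{-\alpha}|\delta(j)|\le\epsilon S_n=O(\epsilon)$ by the first part, while the head $\sum_{j\le N}a_{n-j}^{-1}j^{-\alpha}|\delta(j)|\le M\sum_{j=1}^N a_{n-j}^{-1}j^{-\alpha}$ tends to $0$ because the number of terms is bounded and $a_{n-j}^{-1}\to 0$ as $n\to\infty$ for each fixed $j\le N$. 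For the $\delta(n-j)$ part, the substitution $k=n-j$ recasts the sum as $\sum_{k=0}^{n-1}a_k^{-1}(n-k)^{-\alpha}\delta(k)$, whose positive kernel again sums to $S_n$; the analogous splitting at $k=N$ works, with $(n-k)^{-\alpha}\to 0$ for each fixed $k\le N$ playing the role that $a_{n-j}^{-1}\to 0$ played before.

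The main obstacle is the boundedness of $S_n$ when $\alpha=1$: the clean Beta-integral identification no longer applies, and one is pushed into a dyadic split with a somewhat wasteful estimate $\sum_{k\le n/2}(\log k)^{-1}\ll n$. Beyond that, the argument is entirely structural and uses only that $S_n=O(1)$ and that $a_m^{-1}\to 0$ as $m\to\infty$; the endpoint convention $a_0^{-1}=a_1^{-1}=1$ is absorbed harmlessly into the finite head of each split.
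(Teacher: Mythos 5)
Your proof is correct, and it is a genuinely self-contained argument where the paper's proof consists solely of citations to three lemmas in the appendix of [CMTsub]. Since the referenced proofs are not reproduced in this paper, a direct comparison at the level of technique is not possible from the text; what I can say is that your route is the natural elementary one. For the boundedness of $S_n=\sum_{j=1}^n a_{n-j}^{-1}j^{-\alpha}$, your Beta-integral/Riemann-sum identification for $0<\alpha<1$ and your dyadic split at $n/2$ for $\alpha=1$ are both standard and sound; note that for $0<\alpha<1$ one can avoid integral comparison entirely by splitting at $n/2$ as well, bounding $\sum_{j\le n/2}(n-j)^{-(1-\alpha)}j^{-\alpha}\ll n^{-(1-\alpha)}\sum_{j\le n/2}j^{-\alpha}\ll 1$ and symmetrically for $j>n/2$, which keeps the two cases of $\alpha$ structurally parallel. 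For the convergence claim, your $\epsilon/N$ split for the $\delta(j)$ part and the change of variable $k=n-j$ reducing the $\delta(n-j)$ part to the symmetric kernel $a_k^{-1}(n-k)^{-\alpha}$ (still summing to $S_n$) is exactly the right move: the tail is controlled by $\epsilon S_n$, and the finite head vanishes because $a_{n-j}^{-1}\to0$ (resp.\ $(n-k)^{-\alpha}\to0$) pointwise for each fixed index. The only ingredient you use is $S_n=O(1)$ together with pointwise vanishing of the kernel in the appropriate variable, which is the minimal structure needed. Everything checks out.
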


\begin{proof}
For the first statement, see for example~\cite[Lemmas~A.1 and~A.4]{CMTsub}.
The second statement holds by~\cite[Lemma~A.2]{CMTsub} for $0<\alpha<1$ and
by~\cite[Lemma~A.4]{CMTsub} for $\alpha=1$.~
\end{proof}

\begin{lemma}  \label{lem:K}
Assume conditions~\eqref{eq:tau} and~\eqref{eq:K} and let 
$g\in L^\infty$. Then $\lim_{n\to\infty} \int_Y g\circ f^n\,d\mu=0$ if and only if 
\[
\lim_{n\to\infty}\sum_{1\le j\le n}
a_{n-j}^{-1} \sum_{\eps j<i< \eps^{-1}j}\int_{Y_{j+i}}g\circ f^j\,d\mu=0
\qquad\text{for all $\eps>0$}.
\]
In particular, it suffices that
\[
\lim_{j\to\infty}\; j^\alpha \!\!\!\!\!\!
\sum_{\eps j<i< \eps^{-1}j}\int_{Y_{j+i}}g\circ f^j\,d\mu=0
\qquad\text{for all $\eps>0$}.
\]
\end{lemma}

\begin{proof}
Write $Y$ as a disjoint union
\[
Y=\bigcup_{j=0}^n\big\{y\in Y:f^{n-j}y\in Y,\,\tau(f^{n-j}y)>j\big\}.
\]
Then
\begin{align*}
\int_Y g\circ f^n\,d\mu & =\sum_{j=0}^n\int_X 1_Y  1_Y \circ f^{n-j} 1_{\{\tau>j\}}\circ f^{n-j} g\circ f^n\,d\mu \\
	&  =\sum_{j=0}^n \int_X 1_Y (L^{n-j}1_Y)\, 1_{\{\tau>j\}}\, g\circ f^j\,d\mu.
\end{align*}
	By~\eqref{eq:K},
\[
 \int_Y g\circ f^n\,d\mu 
	 = K\sum_{j=0}^n a_{n-j}^{-1}\int_Y 1_{\{\tau>j\}}g\circ f^j\,d\mu + O\Big(|g|_\infty \sum_{j=0}^n e_{n-j} \, \mu(y\in Y:\tau(y)>j)\Big)
\]
	where $e_n=o(a_n^{-1})$ and $\mu_Y(\tau>j)=O(j^{-\alpha})$ by~\eqref{eq:tau}.
	By Proposition~\ref{prop:CMT},
	$\lim_{n\to\infty}\sum_{j=0}^n e_{n-j}\, \mu(y\in Y:\tau(y)>j)=0$.
Hence
\[
 \int_Y g\circ f^n\,d\mu 
 = K\sum_{j=0}^n a_{n-j}^{-1}\int_Y 1_{\{\tau>j\}}g\circ f^j\,d\mu + o(1).
\]

Next, 
\[
 \int_Y 1_{\{\tau \ge (1+\eps^{-1})j\}}g\circ f^j\,d\mu
 \le |g|_\infty \mu_Y(\tau\ge \eps^{-1}j) \ll (\eps^{-1}j)^{-\alpha},
\]
so
\[
 \Big|\sum_{j=0}^n a_{n-j}^{-1}\int_Y 1_{\{\tau \ge (1+\eps^{-1})j\}} g\circ f^j\,d\mu \Big|
\ll \eps^\alpha \sum_{j=0}^n a_{n-j}^{-1} j^{-\alpha}\ll \eps^\alpha.
\]
(For the last inequality we applied Proposition~\ref{prop:CMT}.)

Also, by~\eqref{eq:tau},
\[
j^\alpha \mu_Y(\tau>j)-j^\alpha \mu_Y(\tau>(1+\eps)j)
\to\gamma(1-(1+\eps)^{-\alpha})
\]
so 
$\mu_Y(j<\tau\le (1+\eps)j)=O(\eps)j^{-\alpha}+o(j^{-\alpha})$.
Hence it follows from Proposition~\ref{prop:CMT} that
\begin{align*}
 \limsup_{n\to\infty}\Big| & \sum_{j=0}^n a_{n-j}^{-1} \int_Y 1_{\{\tau<j\le(1+\eps)j\}}g\circ f^j\,d\mu \Big|
\\ & \le |g|_\infty \limsup_{n\to\infty}\sum_{j=0}^n a_{n-j}^{-1} 
\mu_Y(j<\tau\le (1+\eps)j) \ll O(\eps).
\end{align*}

To summarise, we have shown that $\lim_{n\to\infty}\int_Y g\circ f^n\,d\mu=0$
if and only if
 $\lim_{n\to\infty}\sum_{j=0}^n a_{n-j}^{-1} \int_Y 1_{\{1+\eps)j< \tau < (1+\eps^{-1})j\}}g\circ f^j\,d\mu =0$.
This completes the proof of the first statement, and the second statement follows by Proposition~\ref{prop:CMT}.
\end{proof}

Fix $\eps>0$ and suppose that $j$ is sufficiently large that $\eps j\ge n_0$. Then
\begin{equation} \label{eq:p}
\int_{Y_{j+i}}g\circ f^j\,d\mu= \sum_{p=1}^q\sum_{\psi(r)=p}\int_{Y_{j+i,r}}g\circ f^j\,d\mu
\end{equation} 
where $f^j$ maps each $Y_{j+i,r}$ bijectively onto $X_{i,p}$.
Hence
our treatment divides naturally into two subcases depending on whether $g$ is mean zero or constant on $X_{n,p}$ for $n\ge n_0$, $1\le p\le q$.

\vspace{1ex}
\noindent
\textbf{Piecewise constant global observables.}
These are global observables $g\in L^\infty$ that are constant on $X_{n,p}$, i.e.\ $g|_{X_{n,p}}\equiv \bar g_{n,p}$, for all $n,p$, where the constants $\bar g_{n,p}$ are bounded and satisfy the constraint~\eqref{eq:g}.

\vspace{1ex}
\noindent
\textbf{Piecewise mean zero observables.}
These are $L^\infty$ observables $g$ such that 
$\int_{X_{n,p}}g\,d\mu=0$, i.e.\ $\bar g_{n,p}=0$, for all $n,p$.

\subsection{Piecewise constant global observables}
\label{sec:pwc}

In this subsection, we prove global-local mixing for piecewise constant global observables. (Condition~\eqref{eq:J} is not required for this.)
The proof divides into the cases $\alpha\in(0,1)$ and $\alpha=1$.

\begin{lemma} \label{lem:pwc}
Suppose $\alpha\in(0,1)$.  Assume
conditions~\eqref{eq:Y} and~\eqref{eq:K}.
Then global-local mixing holds for all piecewise constant centred global observables $g\in L^\infty$.
\end{lemma}

\begin{proof} 
Recall from Lemma~\ref{lem:K} that we need to prove
\[
\lim_{n\to\infty}\sum_{1\le j\le n} a_{n-j}^{-1} 
\sum_{\eps j < i < \eps^{-1}j}\int_{Y_{j+i}}g\circ f^j\,d\mu=0
\qquad\text{for all $\eps>0$}.
\]
For $\alpha\in(0,1)$, arguments similar to those in the proof of Lemma~\ref{lem:K} show that this reduces further to
\begin{equation} \label{eq:K2}
\lim_{n\to\infty}\sum_{\eps n< j< (1-\eps) n}
a_{n-j}^{-1} \sum_{\eps j < i < \eps^{-1}j}\int_{Y_{j+i}}g\circ f^j\,d\mu=0
\qquad\text{for all $\eps>0$}.
\end{equation} 
Indeed the contribution from $j=1$ to $\eps n$ is bounded by
\[
|g|_\infty\sum_{1\le j\le \eps n}
a_{n-j}^{-1} \mu_Y(\tau>j)
\ll n^{-(1-\alpha)}\sum_{1\le j\le \eps n}j^{-\alpha}
\ll \eps^{1-\alpha},
\]
and the contribution from $j=(1-\eps)n$ to $n$ is bounded by
\[
|g|_\infty\sum_{(1-\eps)n\le j\le n}
a_{n-j}^{-1} \mu_Y(\tau>j)
\ll n^{-\alpha} \sum_{(1-\eps)n\le j\le n}a_{n-j}^{-1}
\ll n^{-\alpha} \sum_{1\le j\le \eps n} j^{-(1 - \alpha)}
\ll \eps^{\alpha}.
\]

Let $g\in L^\infty$ be a piecewise constant centred global observable.
Using~\eqref{eq:p},
\[
\int_{Y_{j+i}}g\circ f^j\,d\mu=\sum_{p=1}^q\sum_{\psi(r)=p}\bar g_{i,p} \mu(Y_{j+i,r}).
\]
This combined with~\eqref{eq:K2} means that we have to show that $\lim_{n\to\infty} A_n=0$ where
\[
A_n=
\sum_{i\in R_n} \sum_{p=1}^q\bar g_{i,p}
\sum_{j\in S_{i,n}}
a_{n-j}^{-1}\sum_{\psi(r)=p}\mu(Y_{j+i,r}).
\]
Here, 
\[
R_n\subset\big\{\eps^2 n< i<(\eps^{-1}-1)n\big\}, \qquad
S_{i,n}\subset\big\{\eps \max\{i,n\}<j \le \min\{\eps^{-1}i, (1-\eps)n\}\big\}.
\]
(The containments might be strict since $i$ and $j$ are integers.)

\vspace{1ex} \noindent
\textbf{Step 1: Summation by parts over $j$.}
Let $\phi_{i,n}=\min S_{i,n}$, $\Phi_{i,n}=\max S_{i,n}$. 
Define
$\mu_{k,p}=\sum_{\psi(r)=p}\sum_{n\ge k}\mu(Y_{n,r})$.

Then
$A_n=
\sum_{i\in R_n}\sum_{p=1}^q\bar g_{i,p} B_{i,n,p}$ where
\[
B_{i,n,p}  = \sum_{j\in S_{i,n}} a_{n-j}^{-1}\sum_{\psi(r)=p}\mu(Y_{j+i,r})=
\sum_{\phi_{i,n}\le j\le \Phi_{i,n}} a_{n-j}^{-1}\{\mu_{j+i,p}-\mu_{j+i+1,p}\}.
\]
Summation by parts gives
\[
B_{i,n,p}  = 
\sum_{\phi_{i,n}\le j\le \Phi_{i,n}} \{a_{n-j}^{-1}-a_{n-j+1}^{-1}\}\mu_{j+i,p}
\;+\; \partial_{i,n,p,\phi_{i,n}} \;-\; \partial_{i,n,p,\Phi_{i,n}+1}
\]
where the boundary terms are 
\[
\partial_{i,n,p,c_{i,n}}=a_{n-c_{i,n}+1}^{-1}\,\mu_{c_{i,n}+i,p},
\qquad c_{i,n}=\phi_{i,n},\,\Phi_{i,n}+1.
\]

\vspace{1ex} \noindent
\textbf{Step 2: Simplification of $B_{i,n,p}$.}
Let $\tilde \mu_k=k^{-\alpha}$.
By~\eqref{eq:Y}, 
$\mu_{k,p}=\gamma_p\tilde\mu_k+o(k^{-\alpha})$.
Using also that $a_{n-c_{i,n}+1}^{-1}\ll n^{-(1-\alpha)}$,  
we obtain
\[
\partial_{i,n,p,c_{i,n}} = \gamma_p \tilde\partial_{i,n,c_{i,n}} +o(n^{-1}).
\qquad
\tilde\partial_{i,n,c_{i,n}} =a_{n-c_{i,n}+1}^{-1} \tilde\mu_{c_{i,n}+i}.
\]
Similarly, using in addition that
$a_{n-j}^{-1}-a_{n-j+1}^{-1} \ll
n^{-(2-\alpha)}$ and $\Phi_{i,n}-\phi_{i,n}\ll n$,
\[
\sum_{\phi_{i,n}\le j\le \Phi_{i,n}} \{a_{n-j}^{-1}-a_{n-j+1}^{-1}\}\mu_{j+i,p}
=\gamma_p \sum_{\phi_{i,n}\le j\le \Phi_{i,n}} \{a_{n-j}^{-1}-a_{n-j+1}^{-1}\}\tilde\mu_{j+i}
+o(n^{-1}).
\]
Hence, $B_{i,n,p}=\gamma_p \tilde B_{i,n}+o(n^{-1})$ where
\[
\tilde B_{i,n}  = 
\sum_{\phi_{i,n}\le j\le \Phi_{i,n}} \{a_{n-j}^{-1}-a_{n-j+1}^{-1}\}\tilde\mu_{j+i}
\;+\; \tilde \partial_{i,n,\phi_{i,n}} \;-\; \tilde\partial_{i,n,\Phi_{i,n}+1}
\]
Moreover, reversing the summation by parts argument in Step~1, we see immediately that
\[
\tilde B_{i,n}=
\sum_{\phi_{i,n}\le j\le \Phi_{i,n}} a_{n-j}^{-1}\{\tilde\mu_{j+i}-\tilde\mu_{j+i+1}\}=
\alpha \sum_{\phi_{i,n}\le j\le \Phi_{i,n}} a_{n-j}^{-1}(j+i)^{-(\alpha+1)}
+O(n^{-2}).
\]
Since 
$|R_n|\ll n$ and $|\bar g_{i,p}|\le |g|_\infty$, 
\begin{align*}
A_n & =\sum_{i\in R_n}\sum_{p=1}^q\bar g_{i,p}\Big\{\gamma_p\alpha \sum_{\phi_{i,n}\le j\le \Phi_{i,n}} a_{n-j}^{-1}(j+i)^{-(\alpha+1)} + o(n^{-1})\Big\}
\\ & =\alpha \sum_{i\in R_n}\bar g_i \sum_{\phi_{i,n}\le j\le \Phi_{i,n}} a_{n-j}^{-1}(j+i)^{-(\alpha+1)} \;+\; o(1).
\end{align*}

\vspace{1ex} \noindent
\textbf{Step 3: Summation by parts over $i$.}
We have reduced to showing that $\lim_{n\to\infty}C_n=0$ where
\[
C_n=\sum_{i\in R_n}\bar g_i \sum_{\phi_{i,n}\le j\le \Phi_{i,n}} a_{n-j}^{-1}(j+i)^{-(\alpha+1)} 
=\sum_{\eps n<j<(1-\eps)n} a_{n-j}^{-1}
\sum_{\eps j<i<\eps^{-1}j}\bar g_i (j+i)^{-(\alpha+1)} .
\]
Define
	\[
b_i=\sum_{k=n_0}^i k^{-\alpha}\bar g_k, \qquad  
b_n^*=\sup_{i\le n}|b_i|, \qquad
d_{i,j}= 
i^\alpha(j+i)^{-(\alpha+1)} .
\]
Then $d_{i,j}\ll n^{-1}$ and $d_{i,j}-d_{i+1,j}\ll n^{-2}$.
Also, 
$b_n^*=o(a_n)=o(n^{1-\alpha})$ by Lemma~\ref{lem:g}.

Summation by parts over $i$ gives
\begin{align*}
\sum_{\eps j<i<\eps^{-1}j} \bar g_i (j+i)^{-(\alpha+1)}
& =\sum_{\eps j<i<\eps^{-1}j} \bar g_i i^{-\alpha} d_{i,j}
=\sum_{\eps j<i<\eps^{-1}j} (b_i-b_{i-1}) d_{i,j}
\\ &  =\sum_{\eps j<i<\eps^{-1}j} b_i (d_{i,j}-d_{i+1,j})
\;+\; O(b_n^* n^{-1}) 
\\ &\ll n\cdot b_n^* \cdot n^{-2}+b_n^* n^{-1}=2 b_n^* n^{-1}=o(n^{-\alpha}).
\end{align*}
Hence $|C_n|\ll n\cdot n^{-(1-\alpha)}\cdot o(n^{-\alpha})=o(1)$,
completing the proof.
\end{proof}

\begin{lemma} \label{lem:pwc1}
Suppose $\alpha=1$.  Assume
conditions~\eqref{eq:Y} and~\eqref{eq:K}.
Then global-local mixing holds for all piecewise constant centred global observables $g\in L^\infty$.
\end{lemma}

\begin{proof}
Notice that
\[
\sum_{(1-\eps)n\le j\le n}a_{n-j}^{-1}\mu_Y(\tau>j)
\ll n^{-1} \sum_{(1-\eps)n\le j\le n}1 \le \eps,
\]
so the contribution from $(1-\eps)n\le j\le n$ is negligible as in
the proof of Lemma~\ref{lem:pwc}.
However, the contribution from $1\le j\le \eps n$ cannot be immediately ignored.
Also, it is convenient to include the contribution from
$1\le i\le \eps j$.
Hence, we initially reduce to showing that
\[
\lim_{n\to\infty}\sum_{1\le j< (1-\eps) n}
a_{n-j}^{-1} \sum_{1\le i < \eps^{-1}j}\int_{Y_{j+i}}g\circ f^j\,d\mu=0
\qquad\text{for all $\eps>0$}.
\]

\vspace{1ex} \noindent
\textbf{Step 1. Simplifying $a_{n-j}^{-1}$.}
The main difference from the proof of Lemma~\ref{lem:pwc} is that we can immediately replace $a_{n-j}^{-1}$ by $a_n^{-1}$. (This idea seems unhelpful when $\alpha<1$.)
Note that $a_{n-j}^{-1}-a_n^{-1}\ll (\log n)^{-2}j/n \le (\log n)^{-2}$.
Hence 
\begin{align*}
\Big|\sum_{1\le j< (1-\eps) n}
\big\{a_{n-j}^{-1}-a_n^{-1}\big\} \sum_{1\le i < \eps^{-1}j}\int_{Y_{j+i}}g\circ f^j\,d\mu\Big|
& \ll (\log n)^{-2}|g|_\infty \sum_{1\le j\le n} \mu_Y(\tau>j)
\\ & \ll (\log n)^{-1}.
\end{align*}
This means that we reduce to showing 
\[
\lim_{n\to\infty} (\log n)^{-1} 
\sum_{1\le j\le n}\, \sum_{1\le i <  \eps^{-1}j}\int_{Y_{j+i}}g\circ f^j\,d\mu=0,
\]
or equivalently
$\lim_{n\to\infty}(\log n)^{-1} A_n=0$ where
\[
A_n  =
\sum_{1\le i\le \eps^{-1}n}\, \sum_{\eps i \le j\le n}
\sum_{p=1}^q \sum_{\psi(r)=p} \int_{Y_{j+i,r}}g\circ f^j\,d\mu
 =\sum_{1\le i\le \eps^{-1}n}\sum_{p=1}^q \bar g_{i,p}
 \sum_{\psi(r)=p} \,\sum_{\eps i \le j\le n} \mu(Y_{j+i,r}).
\]
By~\eqref{eq:Y},
\begin{align*}
A_n & =
\sum_{1\le i\le \eps^{-1}n}\sum_{p=1}^q \bar g_{i,p}
 \Big(\sum_{\psi(r)=p} \,\sum_{j\ge \eps i} \mu(Y_{j+i,r})
 -\sum_{\psi(r)=p} \,\sum_{j>n} \mu(Y_{j+i,r})
\Big)
 \\ & = \sum_{1\le i\le \eps^{-1}n}\sum_{p=1}^q \bar g_{i,p}
  \big(\gamma_p i^{-1}+o(i^{-1})+O(n^{-1})\big)
  = \sum_{1\le i\le \eps^{-1}n}\sum_{p=1}^q \bar g_{i,p}
  \gamma_p i^{-1} \;+\;o(\log n)
 \\ &  = \sum_{1\le i\le \eps^{-1}n}\bar g_i i^{-1} \;+\;o(\log n)
  = o(\log n),
\end{align*}
where the last estimate follows from Lemma~\ref{lem:g}.
\end{proof}

\subsection{Piecewise mean zero observables}
\label{sec:pw0}

In this subsection, we prove global-local mixing for piecewise mean zero observables.

\begin{lemma} \label{lem:pw0}
Assume conditions~\eqref{eq:Y},~\eqref{eq:K} and~\eqref{eq:J}.
Then global-local mixing holds for all piecewise mean zero observables $g\in L^\infty$.
\end{lemma}

\begin{proof}
Let $c_{j,i,p}$ be as in~\eqref{eq:J}.
Since $g$ is piecewise mean zero,
\[
\sum_{\psi(r)=p}\int_{Y_{j+i,r}}g\circ f^j\,d\mu=
\int_{X_{i,p}}gJ_{j,i,p}\,d\mu
=\int_{X_{i,p}}g(J_{j,i,p}-c_{j,i,p})\,d\mu,
\]
so by Proposition~\ref{prop:X},
\[
\Big|\sum_{\psi(r)=p}\int_{Y_{j+i,r}}g\circ f^j\,d\mu\Big|
\ll j^{-\alpha} |g|_\infty \sup_{X_{i,p}}|J_{j,i,p}-c_{j,i,p}|
\]
for all $\eps j< i< \eps^{-1}j$.
By~\eqref{eq:J},
\[
\lim_{j\to\infty}\;  j^\alpha \!\!\!\!\!\!\sum_{\eps j<i< \eps^{-1}j} \sum_{\psi(r)=p}\int_{Y_{j+i,r}}g\circ f^j\,d\mu=0.
\]
By~\eqref{eq:p}, we obtain
$\lim_{j\to\infty} j^\alpha \sum_{\eps j<i< \eps^{-1}j}\int_{Y_{j+i}}g\circ f^j\,d\mu=0$
so the result follows from Lemma~\ref{lem:K}.
\end{proof}

\section{Examples}
\label{sec:ex}

In this section, we verify the hypotheses of Theorem~\ref{thm:main} for a variety of examples, thereby proving full global-local mixing.
Most of the examples 
 are AFN maps~\cite{Zweimuller98,Zweimuller00} and the necessary background is contained in
Subsection~\ref{sec:AFN}. 
In Subsection~\ref{sec:one}, we consider nonMarkovian maps with one neutral fixed point.
In Subsection~\ref{sec:several}, we consider intermittent maps with several neutral fixed points.
In Subsection~\ref{sec:EMV}, we treat a higher-dimensional nonMarkovian intermittent map introduced in~\cite{EMV21}.
In Subsection~\ref{sec:parabolic}, we consider parabolic rational maps of the complex plane.

\subsection{Background on AFN maps}
\label{sec:AFN}

Zweim\"uller~\cite{Zweimuller98,Zweimuller00}  studied a class of nonMarkovian interval maps
$f : X \to X$ ($X\subset\R$ a closed bounded interval) with indifferent fixed points. It is assumed that there
is a measurable partition $\cP$ of $X$ into open intervals such that $f$ is $C^2$ and
strictly monotone on each $I\in\cP$. 
The map is said to be AFU if:

\begin{itemize}

\parskip=-2pt
\item[(A)] \emph{Adler’s condition:} $f''/(f')^2$ is bounded on $\bigcup_{I\in\cP}I$;
\item[(F)] \emph{Finite images:} $\{f I : I \in\cP\}$ is finite;
\item[(U)] \emph{Uniform expansion:} There exists $\rho>1$ such that
$|f'| \ge\rho$ on $\bigcup_{I\in\cP}I$.
\end{itemize}
The map is AFN if condition (U) is relaxed to condition (N) where $|f'|\ge1$ and there is a finite number of neutral fixed points $\xi_1,\dots,\xi_d$ satisfying certain properties such that $|f'|>1$ on $X\setminus\{\xi_1,\dots,\xi_k\}$. In our examples,  there
exist $\alpha\in(0,1]$ and $b_1,\dots,b_d>0$ such that
\begin{equation} \label{eq:xi}
|f(x)-x|\sim b_k|x-\xi_k|^{1+1/\alpha}, \qquad
|f'(x)-1|\sim b_k(1+1/\alpha)|x-\xi_k|^{1/\alpha}, 
\end{equation}
as $x\to\xi_k$, $k=1,\dots, d$.
Condition (N) is shown in~\cite{Zweimuller98,Zweimuller00} to be satisfied for such maps.

We will make use of the
  following basic distortion estimate.
  Let $M=\sup | f'' |/(f')^2$. By Adler's condition, $M<\infty$.

\begin{prop} \label{prop:BD}
  $|f'(x)^{-1}- f'(y)^{-1}| \le  Me^{2M} | x - y|$
  for all $x,y\in I$ and all $I\in\cP$.
\end{prop}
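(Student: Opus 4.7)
The plan is to apply the mean value theorem directly to the function $\varphi(x)=1/f'(x)$ on the open interval $I\in\beta$. Since $f$ is $C^2$ on $I$ and $f'$ is non-vanishing there (by strict monotonicity), $\varphi$ is $C^1$ on $I$ with derivative
\[
\varphi'(x)=-\frac{f''(x)}{f'(x)^2}.
\]
Adler's condition (A) is exactly the statement that $|\varphi'|$ is bounded by $M$ uniformly over $\bigcup_{I\in\beta}I$.

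Applying the MVT to $\varphi$ on $I$ then yields
\[
|f'(x)^{-1}-f'(y)^{-1}|=|\varphi(x)-\varphi(y)|\le M|x-y|
\]
for all $x,y\in I$. Since $e^{2M}\ge 1$, this sharper bound immediately implies the inequality $|f'(x)^{-1}-f'(y)^{-1}|\le Me^{2M}|x-y|$ asserted in the proposition.

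The main ``obstacle'' here is essentially nothing: this is a routine one-step distortion estimate whose only input is Adler's condition rewritten as a bound on $\varphi'$. The factor $e^{2M}$ in the statement is strictly weaker than what the above argument delivers; my guess is that the authors state it in this looser form to match the standard Denjoy--Koebe distortion constant that arises when one telescopes $\log|f'|$ along iterates of $f$ (or along the induced map $F=f^\tau$), where a genuine exponential factor $e^{\,\mathrm{const}\cdot M}$ is unavoidable. In any event, the direct route through $\varphi=1/f'$ is the cleanest: it is a three-line proof consisting of differentiating $\varphi$, invoking (A), and invoking the MVT on the interval $I$.
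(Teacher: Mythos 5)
Your proof is correct and takes a genuinely different, cleaner route than the paper's. The paper applies the mean value theorem to $f'$ itself, writing $f'(x)-f'(y)=f''(z)(x-y)$, and then multiplies and divides by $f'(z)^2$ to expose the Adler quantity $f''(z)/f'(z)^2$; this leaves the ratio $f'(z)^2/(f'(x)f'(y))$ to be controlled, which requires the auxiliary bounded-distortion estimate $|f'(u)/f'(v)|\le e^M$ for $u,v\in I$, and that is precisely where the factor $e^{2M}$ comes from. You instead apply the MVT directly to $\varphi=1/f'$, whose derivative $-f''/(f')^2$ is exactly the quantity that Adler's condition bounds, so no distortion ratio enters and you obtain the sharper constant $M$; since $e^{2M}\ge1$, your estimate trivially implies the stated one. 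One minor quibble: non-vanishing of $f'$ on $I$ does not follow from strict monotonicity alone (consider $x\mapsto x^3$ on a neighbourhood of $0$); in the present setting it is guaranteed by condition (U) or (N), which gives $|f'|\ge1$ on $\bigcup_{I\in\beta}I$, and one should cite that rather than monotonicity. Finally, the looser constant in the proposition is best understood simply as an artifact of the paper's chosen route via the MVT on $f'$, not as a deliberate alignment with Koebe-type distortion constants.
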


\begin{proof}
  First, note that
  $| f'(x) / f' (y) | \le  e^M$ for all $x,y\in I$ (see \cite[Section 4.3]{Aaronson}
  for details).
  Hence for $x,y\in I$ with $x<y$,
  \[
    \frac{ f' (x) - f' (y) }{ f'(x)f'(y) }
    = \frac{ f'' ( z ) }{ f' (z )^2 }
    \frac{ f' ( z )  }{ f' (x) } \frac{ f'( z ) }{ f'(y) }
    ( y - x )
    \le  Me^{2M} ( y - x )
  \]
  for some $z \in (x,y)$ by the mean value theorem.
\end{proof}

Topologically mixing AFN maps are conservative and exact with respect to Lebesgue measure and admit a unique (up to scaling) equivalent $\sigma$-finite invariant measure $\mu$.
When there is at least one neutral fixed point, $\mu(X)=\infty$.
Moreover $\mu(X\setminus \bigcup_{k=1}^d B_\eps(\xi_1))<\infty$ for all $\eps>0$.

\paragraph{Global observables}
For AFN maps, it is natural to define $g\in L^\infty$ to be a global observable if
there exists $\bar g\in\R$ such that
\[
\lim_{\eps\to0^+}\frac{\int_{X\setminus\bigcup_{k=1}^d B_\eps(\xi_k)}g\,d\mu}{
\mu({X\setminus\bigcup_{k=1}^d B_\eps(\xi_k)})}=\bar g.
\]
For our examples, it is easily seen that this is equivalent to our abstract Definition~\ref{def:global}.

\paragraph{Verification of conditions~\eqref{eq:Y},~\eqref{eq:K} and~\eqref{eq:J} for AFN maps}

We induce to an appropriate set $Y$ with $0<\mu(Y)<\infty$ bounded away from the neutral fixed points and normalise so that $\mu(Y)=1$.
By general arguments~\cite{Zweimuller98,Zweimuller00}, the induced map
$F=f^\tau:Y\to Y$ is an AFU map; the properties (A) and (F) are inherited from 
$f$ and clearly $|F'|\ge |f'|$ which is uniformly larger than $1$ on $Y$.
With extra care, we are able to ensure that $F$ is topologically mixing and hence mixing.
Standard arguments, repeated in the examples below, show that~\eqref{eq:Y}  is satisfied; moreover 
$\mu(Y_n)\sim \alpha \gamma n^{-(\alpha+1)}$ for some $\gamma>0$.
Hence condition~\eqref{eq:K} is satisfied by~\cite{Gouezel11,MT12}.

It follows that the main effort focuses on condition~\eqref{eq:J}.
One ingredient is the following:

\begin{prop} \label{prop:h}
For AFN maps, the density 
$h=d\mu/d\Leb$ has the property that
the one-sided limits $h(y^\pm)\in(0,\infty)$
exist for all $y\in Y$.
\end{prop}

\begin{proof}
Since $F$ is an AFU map, 
$h|_Y$ lies in $\BV(Y)$ and is bounded away from zero~\cite{Rychlik83}.
This implies the result.
\end{proof}

\begin{rmk} \label{rmk:thaler}
In the special case where the branches $f|_I$ are full for each $I\in\cP$
(so $\overline{fI}=X$ for $I\in\cP$), these maps were studied by Thaler~\cite{Thaler80,Thaler83}.
It is then the case that $F:Y\to Y$ is an AFU map with full branches. (In particular, $F$ is Gibbs-Markov.) 
The density $h$ has the property that $h|_Y$ is Lipschitz (see for example~\cite[Proposition~2.5]{KKM19}).  More generally, if
$F$ is a Gibbs-Markov map, then $h|_Y$ is uniformly piecewise Lipschitz on the image partition (namely the partition of $Y$ generated by the images of the branches of $F$).
\end{rmk}

Next, we mention the following convenient formula for 
$J_{j,n,p}$ from Lemma~\ref{lem:pw0} expressed in terms of
$h|_Y$. (This result does not use the AFN structure.)

\begin{prop} \label{prop:J}
For $x\in X_{n,p}$,
\[
J_{j,n,p}(x)=\frac{
\sum_{\psi(r)=p}h(y_{j,n,p})\big|(f^j)'(y_{j,n,r})\big|^{-1}
}
{
\sum_{\psi(r)=p}\sum_{\ell=1}^\infty h(y_{\ell,n,r})\big|(f^\ell)'(y_{\ell,n,r})\big|^{-1}
}
\]
where $y_{\ell,n,r}\in Y_{n+\ell,r}$ with $f^\ell y_{\ell,n,r}=x$.
\end{prop}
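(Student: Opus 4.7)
The plan is to handle the numerator and denominator of the claimed ratio separately: compute the numerator by a direct change of variables, and identify the denominator with $h(x)$ by iterating the invariance relation $Lh=h$.

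For $x\in X_{n,p'}$ and $p$ with $\psi(p)=p'$, the bijection $f^j|_{Y_{j+n,p}}$ has a unique inverse branch sending $x$ to $y_{j,n,p}$; a standard change of variables gives
\[
\frac{d(f^j|_{Y_{j+n,p}})_*\mu}{d\Leb}(x)=\frac{h(y_{j,n,p})}{|(f^j)'(y_{j,n,p})|}.
\]
Dividing by $h(x)=d\mu/d\Leb$ and summing over $\psi(p)=p'$ yields
\[
J_{j,n,p'}(x)=\frac{1}{h(x)}\sum_{\psi(p)=p'}\frac{h(y_{j,n,p})}{|(f^j)'(y_{j,n,p})|},
\]
so the statement reduces to the identity $h(x)=\sum_{\psi(p)=p'}\sum_{\ell=1}^\infty h(y_{\ell,n,p})|(f^\ell)'(y_{\ell,n,p})|^{-1}$.

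For this, iterate $Lh=h$. The $f$-preimages of $x\in X_{n,p'}$ lying in $Y$ are exactly $\{y_{1,n,p}:\psi(p)=p'\}\subset Y_{n+1}$, and those lying in $X\setminus Y$ all lie in $X_{n+1,p'}$ (see below), so
\[
h(x)=\sum_{\psi(p)=p'}\frac{h(y_{1,n,p})}{|f'(y_{1,n,p})|}+L(1_{X_{n+1,p'}}h)(x).
\]
Applying the same splitting recursively and combining the Jacobians via the chain rule telescopes, for each $N\ge1$, to
\[
h(x)=\sum_{\ell=1}^N\sum_{\psi(p)=p'}\frac{h(y_{\ell,n,p})}{|(f^\ell)'(y_{\ell,n,p})|}+L^N(1_{X_{n+N,p'}}h)(x).
\]
The partial sums are non-decreasing and bounded above by $h(x)$, and $\int_X L^N(1_{X_{n+N,p'}}h)\,d\Leb=\mu(X_{n+N,p'})\to 0$ by Proposition~\ref{prop:X}, so the remainder tends to $0$ in $L^1(\Leb)$. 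Monotone convergence then forces the desired identity Lebesgue-a.e.\ on $X_{n,p'}$.

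The main delicacy is the coherence claim $f(X_{n+1,p'})\subset X_{n,p'}$ used in the recursion. This is not a dynamical statement but a consistency check on the labelling: every $z\in X_{n+1,p''}$ can be written as $z=f^j y$ for some $y\in Y_{n+j+1,p}$ with $\psi(p)=p''$, so $fz=f^{j+1}y\in X_{n,p''}$; if additionally $fz=x\in X_{n,p'}$, the disjointness of the decomposition of $X_n$ forces $p''=p'$. Once this bookkeeping is in place, the rest of the argument is routine.
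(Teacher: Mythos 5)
Your proof is correct, and it reaches the same conclusion by re-deriving the key ingredient that the paper simply cites. The paper appeals to the ``usual formula'' $\mu=\sum_{\ell\ge0}f^\ell_*\big(\mu|_{\{\tau>\ell\}}\big)$, which when restricted to $X_{n,p'}$ immediately gives
$\mu|_{X_{n,p'}}=\sum_{\ell\ge1}\sum_{\psi(p)=p'}(f^\ell|_{Y_{\ell+n,p}})_*\mu$,
and then takes Radon--Nikodym derivatives with respect to Lebesgue (this is the unstated ``direct consequence''). What you do instead is prove this very representation of $\mu|_{X_{n,p'}}$ from scratch by iterating the invariance identity $h=\sum_{fz=\cdot}h(z)/|f'(z)|$, splitting preimages into $Y$-preimages (which are exactly the $y_{1,n,p}$ with $\psi(p)=p'$, by the assumed bijections) versus preimages in $X\setminus Y$ (which land in $X_{n+1,p'}$ by the coherence argument you gave), and then killing the remainder $\widehat L^N(1_{X_{n+N,p'}}h)$ via $\int\widehat L^N(1_{X_{n+N,p'}}h)\,d\Leb=\mu(X_{n+N,p'})\to0$ (Proposition~\ref{prop:X0}) together with monotonicity. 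This is longer but more self-contained: it is effectively a proof of the ``usual formula'' in this setting, and the telescoping via the chain rule makes the cancellations explicit. Your numerator step (change of variables for a diffeomorphism) is exactly the derivative computation implicit in the paper's last sentence.

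Two minor points of hygiene. First, the operator you call $L$ in the recursion must be the transfer operator with respect to \emph{Lebesgue} measure (so $Lh=h$ and $\int Lv\,d\Leb=\int v\,d\Leb$), whereas the paper's $L$ is the transfer operator with respect to $\mu$ (so $L1=1$); it is worth flagging this to avoid a clash with the notation used elsewhere in the paper. Second, the telescoping step tacitly uses that for each $p$ with $\psi(p)=p'$ and each $\ell$, the $\ell$-th preimage branch through $Y_{n+\ell,p}$ passes through a \emph{unique} point of $X_{n+\ell-1,p'}$, \dots, $X_{n+1,p'}$; this does follow from the assumed bijectivity of $f^k\colon Y_{n+\ell,p}\to X_{n+\ell-k,p'}$ for $1\le k\le\ell$, but it deserves a sentence, since otherwise the sum over $z\in X_{n+1,p'}$ with $fz=x$ would not collapse to the single term $h(y_{2,n,p})/|(f^2)'(y_{2,n,p})|$ in the $N=2$ step.
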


\begin{proof} 
It follows from the usual formula 
\(
\mu=\sum_{\ell=0}^\infty f^\ell_*(\mu|_{\{y\in Y:\tau(y)>\ell\}})
\)
for constructing $\mu$ on $X$ from $\mu|_Y$ that
\[
J_{j,n,p}=\displaystyle d\sum_{\psi(r)=p}(f^j|_{Y_{j+n,r}})_*\mu \,\Big/\,
d\sum_{\ell=1}^\infty\sum_{\psi(r)=p}(f^\ell|_{Y_{\ell+n,r}})_*\mu
\]
for $j,n,p\ge1$.
The result is a direct consequence of this.
\end{proof}

\begin{prop} \label{prop:JJ} Let $B>0$. Suppose that uniformly in $r\in\bbA$,
\begin{itemize}
\item[(a)]
$\lim_{n\to\infty}\sup_{Y_{n,r}}|h-h_r|=0$;
\item[(b)]
\(
\sup_{\ell,\,n\ge1}\sup_{Y_{\ell+n,r}}
 \Big({\displaystyle \frac{ n }{ \ell+n }} \Big)^{ B + 1 }|( f^\ell )' |
          <\infty
\); and
\item[(c)]
\(
         \lim_{n\to\infty}\sup_{1\le \ell\le Kn}\sup_{Y_{\ell+n,r}}\Big|\Big( {\displaystyle \frac{ n }{ \ell+n }} \Big)^{ B + 1 }
 |( f^\ell )' |
          -\omega_r\Big|=0
\)
for all fixed $K\ge1$.
\end{itemize}
Then condition~\eqref{eq:J} holds with $c_{j,n,p}=B n^B (j+n)^{-(B+1)}$.
\end{prop}

\begin{proof}
We take $K>\eps^{-1}$.
By (a) and (c),
\[
h(y_{\ell,n,r})|(f^\ell)'(y_{\ell,n,r})|^{-1}= h_r \omega_r^{-1}n^{B+1}(\ell+n)^{-(B+1)}(1+e_{\ell,n,r}(x))
\]
where $\lim_{n\to\infty}e_{\ell,n,r}(x)=0$ uniformly in $1\le \ell\le Kn$, $r$ 
and $x\in X_{n,p}$.
Hence by (b),
\begin{align*}
\sum_{\ell\ge 1} h(y_{\ell,n,r})|(f^\ell)'(y_{\ell,n,r})|^{-1}
&= \sum_{\ell=1}^{Kn} h(y_{\ell,n,r})|(f^\ell)'(y_{\ell,n,r})|^{-1}
+\sum_{\ell>Kn} h(y_{\ell,n,r})|(f^\ell)'(y_{\ell,n,r})|^{-1}
\\ & =B^{-1} h_r\omega_r^{-1}n \big\{1+\tilde e_{n,r}(x) + O(K^{-B})\big\}
\end{align*}
where $\lim_{n\to\infty}\tilde e_{n,r}(x)=0$ uniformly in $r$ and
$x\in X_{n,p}$.
Since $K$ is arbitrarily large,
\[
J_{j,n,p}= \frac
{B n^B (j+n)^{-(B+1)}(1+e_{j,n,r})}
{1+\tilde e_{n,r}}
\]
where $\lim_{n\to\infty}e_{j,n,r}(x)=0$ uniformly in $j< \eps^{-1}n$, $r$ 
and $x\in X_{n,p}$.
We obtain
\[
J_{j,n,p}-c_{j,n,p}=B n^B (j+n)^{-(B+1)}\Big\{
\frac{e_{j,n,r}-\tilde e_{n,r}} {1+\tilde e_{n,r}}  \Big\}.
\]
Hence
\[
\sum_{\eps j<n\le \eps^{-1}j}|J_{j,n,p}-c_{j,n,p}|\ll \sup_{\eps j<n\le \eps^{-1}j}
\Big| \frac{e_{j,n,r}-\tilde e_{n,r}}
{1+\tilde e_{n,r}} \Big|
\]
and~\eqref{eq:J} follows.
\end{proof}

\subsection{Maps with one neutral fixed point}
\label{sec:one}

We begin with examples where there is one neutral fixed point at $0$
(so $f'(0)=1$) and $f'>1$ on $(0,1]$.
Special cases are the Farey map, generalised Pomeau-Manneville maps and generalised LSV maps considered in~\cite{BonannoGiuliettiLenci18,BonannoLenci21}.
We do not assume full branches, nor even that the maps are Markov.
For the moment, we restrict to finitely many branches; this restriction is relaxed in Example~\ref{ex:thaler-D>d}.

\begin{example} \label{ex:2branch}
Fix $\alpha\in(0,1]$, $\kappa>1/\alpha$, $b>0$, $\eta_1\in(0,1)$.
We consider piecewise smooth maps $f:[0,1]\to [0,1]$ given by
$f(x)=\begin{cases} f_0(x), & x<\eta_1 \\ f_1(x), & x>\eta_1\end{cases}$
with two orientation preserving branches satisfying the following properties:
The second branch $f_1$ is uniformly expanding and maps $[\eta_1,1]$ onto $[0,1]$.
The first branch $f_0$ is uniformly expanding on $[\eps,\eta_1]$ for all $\eps>0$ with a neutral fixed point at $0$:
\[
f_0(x)=x + b x^{1+1/\alpha}+O(x^{1+\kappa}), \qquad
f_0'(x)=1 + b(1+1/\alpha)x^{1/\alpha}+O(x^\kappa).
\]
We assume that $f_0\eta_1=\eta \in (f_1^{-1}\eta_1,1]$.

\begin{rmk}
This includes the LSV map as the special case $f_0(x)=x(1+2^{1/\alpha}x^{1/\alpha})$, $f_1(x)=2x-1$. However, in general $f$ is not full-branch.
Even when $f$ is full-branch, these maps are only covered in~\cite{BonannoLenci21} under an extra monotonicity condition~(A5).
\end{rmk}

This is a simple example of a topologically mixing AFN map with invariant set $X=[0,\eta]$.
Note that the first branch $f_0:[0,\eta_1]\to X$ is full and the second uniformly expanding branch $f_1:[\eta_1,\eta]\to X$ need not be full.
As discussed in Subsection~\ref{sec:AFN}, $f$ is conservative and exact, there is a unique (up to scaling) $\sigma$-finite invariant measure~$\mu$ equivalent to Lebesgue, and $\mu(X)=\infty$.

We take $Y=[\eta_1,\eta]$.  The induced AFU map $F=f^\tau:Y\to Y$ has at most one short branch (on 
$Y_1=[f_1^{-1}\eta_1,\eta]$) and the remaining branches are full.
It is easily seen that $F$ is topologically mixing and hence mixing.

There exists $n_0\ge1$ (depending only on the size of the short branch) such that
$f^j$ maps $Y_{j+n}$ bijectively onto
$X_n$ for all $j\ge1$, $n\ge n_0$.
In particular, we can take $q=|\bbA|=1$ in the abstract setup of Section~\ref{sec:main} and we suppress the subscripts $p$, $r$.

Write $X_n=[x_{n+1},x_n]$ where $x_n\downarrow0$.
Without loss of generality, we can suppose that $\kappa\in(1/\alpha,1+1/\alpha)$.
Let $\kappa'=\alpha\kappa-1\in(0,\alpha)$.

\begin{prop} \label{prop:xn}
Let $b'=\alpha^\alpha b^{-\alpha}$ and $b''=\alpha^{\alpha+1}b^{-\alpha}$.
Then
$x_n=  b' n^{-\alpha}+O(n^{-(\alpha+\kappa')})$
and $x_{n-1}-x_n= b'' n^{-(\alpha+1)}+O(n^{-(\alpha+\kappa'+1)})$.
\end{prop}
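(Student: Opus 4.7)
My plan is to exploit the defining recurrence $x_{n-1}=f_0(x_n)$ for the endpoints of $X_n$. Indeed, since $f_0:[0,\eta_1]\to X$ is the ``slow'' branch attached to the neutral fixed point and $X_n=[x_{n+1},x_n]$ with $x_n\downarrow 0$, applying $f_0$ once to $x_n\in X_n$ lands in $X_{n-1}$, so $f_0(x_n)=x_{n-1}$. The expansion of $f_0$ near $0$ then gives
\[
x_{n-1}-x_n = b\,x_n^{1+1/\alpha}+O(x_n^{1+\kappa}),
\]
and the whole proof is an asymptotic analysis of this nonlinear recurrence.

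The standard linearising substitution is $u_n=x_n^{-1/\alpha}$. Plugging in, expanding $(1+by+O(y^{\alpha\kappa}))^{-1/\alpha}$ with $y=u_n^{-1}$, and keeping all error terms carefully, one gets
\[
u_{n-1}=u_n-\tfrac{b}{\alpha}+O(u_n^{-1})+O(u_n^{1-\alpha\kappa}).
\]
Since $\kappa>1/\alpha$ we have $\kappa'=\alpha\kappa-1\in(0,\alpha)\subset(0,1)$, so $u_n^{-\kappa'}$ dominates $u_n^{-1}$; hence $u_n-u_{n-1}=\tfrac{b}{\alpha}+O(u_n^{-\kappa'})$. Telescoping yields $u_n=\tfrac{b}{\alpha}n+O(\sum_{k\le n}u_k^{-\kappa'})$, and a bootstrap (first use $u_n\gg n$ to show $u_n\approx n$, then refine) gives $u_n=\tfrac{b}{\alpha}n+O(n^{1-\kappa'})$. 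Inverting via $x_n=u_n^{-\alpha}$ and expanding produces $x_n=b'n^{-\alpha}+O(n^{-(\alpha+\kappa')})$, with $b'=(\alpha/b)^\alpha=\alpha^\alpha b^{-\alpha}$.

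The second estimate then follows by substitution: using $x_n=b'n^{-\alpha}(1+O(n^{-\kappa'}))$ in the recurrence,
\[
x_{n-1}-x_n = b\,x_n^{1+1/\alpha}+O(x_n^{1+\kappa}) = b(b')^{1+1/\alpha}n^{-(\alpha+1)}+O(n^{-(\alpha+\kappa'+1)}),
\]
where the $O(x_n^{1+\kappa})$ term contributes $O(n^{-\alpha(1+\kappa)})=O(n^{-(\alpha+\kappa'+1)})$ (by definition of $\kappa'$). A short computation shows $b(b')^{1+1/\alpha}=\alpha^{\alpha+1}b^{-\alpha}=b''$.

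The only genuine obstacle is bookkeeping the error terms: one must verify that $\kappa'<1$ (which holds because one can reduce to $\kappa<1+1/\alpha$), so that $\sum_{k\le n}k^{-\kappa'}\approx n^{1-\kappa'}$ controls the accumulated error after telescoping, and that the higher-order $O(u_n^{-2})$ terms from the binomial expansion of $(1+\cdot)^{-1/\alpha}$ are absorbed into the $O(u_n^{-\kappa'})$ term. Once these comparisons are in place, the rest is routine manipulation.
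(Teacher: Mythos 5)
Your proof is correct and follows essentially the same approach as the paper: both apply the linearising substitution $x_n \mapsto x_n^{-1/\alpha}$, telescope the resulting additive recurrence, bootstrap from the crude estimate $x_n^{-1/\alpha}\approx n$ to the refined $O(n^{-\kappa'})$ error term, and substitute back into $x_{n-1}-x_n=f_0(x_n)-x_n$ for the increment. The only difference is cosmetic (you name the quantity $u_n$); the bookkeeping of the error exponents is the same, and you correctly use the preliminary reduction $\kappa\in(1/\alpha,1+1/\alpha)$ to ensure $\kappa'\in(0,1)$.
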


\begin{proof}
We recall the standard argument (see for example~\cite[Lemma~4.8.6]{Aaronson}).
Note that
\[
x_{n-1}^{-1/\alpha}
 =(f_0x_n)^{-1/\alpha}
=\{x_n(1+bx_n^{1/\alpha}+O(x_n^\kappa))\}^{-1/\alpha}
 =x_n^{-1/\alpha}(1-\alpha^{-1}bx_n^{1/\alpha}+O(x_n^\kappa)).
\]
Hence,
\begin{equation} \label{eq:xn}
x_n^{-1/\alpha}-x_{n-1}^{-1/\alpha}=
\alpha^{-1}b+O(x_n^{\kappa-1/\alpha}).
\end{equation}
Summing~\eqref{eq:xn}, we obtain the rough estimate $x_n^{-1/\alpha}\approx n$ and substituting this into~\eqref{eq:xn} yields the more precise expression
\[
x_n^{-1/\alpha}-x_{n-1}^{-1/\alpha}=
\alpha^{-1}b+O(n^{-\kappa'}).
\]
Summing this gives
$x_n^{-1/\alpha}=\alpha^{-1}b n(1+O(n^{-\kappa'}))$ yielding the estimate for $x_n$.

Finally,
\begin{align*}
x_{n-1} & -x_n
 =f_0x_n-x_n=bx_n^{1+1/\alpha}+O(x_n^{1+\kappa})
\\ & =b(b')^{1+1/\alpha}n^{-(\alpha+1)}(1+O(n^{-\kappa'})+O(n^{-\alpha(1+\kappa))}) =b''n^{-(\alpha+1)}+O(n^{-(\alpha+\kappa'+1)}),
\end{align*}
completing the proof.
\end{proof}

By Proposition~\ref{prop:xn},
$\Leb(Y_n)\sim f_1'(\eta_1)^{-1}\Leb(X_n)\sim 
f_1'(\eta_1)^{-1} b''\,n^{-(\alpha+1)}$ as $n\to\infty$.
Since $h\in \BV(Y)$,
\[
\mu(Y_n)\sim h(\eta_1^+)\Leb(Y_n)
\sim h(\eta_1^+) f_1'(\eta_1)^{-1} b'' n^{-(\alpha+1)} 
\quad\text{as $n\to\infty$}
\]
 verifying condition~\eqref{eq:Y}.

It remains to verify condition~\eqref{eq:J}.

\begin{lemma} \label{lem:f'}
        The diffeomorphism $f^j|_{Y_{j+n}}:Y_{j+n}\to X_n$ satisfies
        \[
\{(f^j)'\}^{-1} =  \{f_1'(\eta_1)\}^{-1} \Big(\frac{n}{j+n}\Big)^{\alpha+1}
                \big( 1 + O(n^{-\kappa'})\big)
                ,
        \]
        where the implied constant is independent of $j$ and $n$ uniformly on $Y$.
\end{lemma}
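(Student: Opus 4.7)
The plan is to factor $f^j|_{Y_{j+n}}=f_0^{j-1}\circ f_1|_{Y_{j+n}}$. For $y\in Y_{j+n}$ we have $f_1(y)\in X_{j+n-1}$ and $f_0^i(f_1(y))\in X_{j+n-1-i}$ for $0\le i\le j-1$, so the chain rule gives
\[
\{(f^j)'(y)\}^{-1}=\{f_1'(y)\}^{-1}\,\{(f_0^{j-1})'(f_1(y))\}^{-1}.
\]
I would estimate each factor separately, with the dominant $(n/(j+n))^{\alpha+1}$ coming from the second.

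For the first factor, I would observe that $Y_{j+n}=f_1^{-1}(X_{j+n-1})\cap Y\subset[\eta_1,\eta_1+C(j+n)^{-\alpha}]$, since $X_{j+n-1}\subset[0,x_{j+n-1}]$ with $x_{j+n-1}\ll(j+n)^{-\alpha}$ by Proposition~\ref{prop:xn}, and $f_1^{-1}$ is Lipschitz near $0$. Smoothness of $f_1$ then gives $|f_1'(y)-f_1'(\eta_1)|\ll(j+n)^{-\alpha}$, whence $\{f_1'(y)\}^{-1}=\{f_1'(\eta_1)\}^{-1}(1+O(n^{-\kappa'}))$, using $\kappa'<\alpha$ and $j+n\ge n$.

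For the second factor, I plan to combine the mean value theorem with the Adler bounded-distortion estimate for $f_0^{j-1}$. By MVT applied to $f_0^{j-1}:X_{j+n-1}\to X_n$ there exists $\xi\in X_{j+n-1}$ with $\{(f_0^{j-1})'(\xi)\}^{-1}=\Leb(X_{j+n-1})/\Leb(X_n)$, and Proposition~\ref{prop:xn} yields $\Leb(X_m)=b''m^{-(\alpha+1)}(1+O(m^{-\kappa'}))$, so this ratio equals $(n/(j+n))^{\alpha+1}(1+O(n^{-\kappa'}))$ uniformly in $j$ (using $(j+n-1)/(j+n)=1+O(n^{-1})$ and $\kappa'<1$). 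To propagate from $\xi$ to arbitrary $z\in X_{j+n-1}$, I would use the logarithmic bounded-distortion consequence of Adler's condition (essentially Proposition~\ref{prop:BD}): for $z_1,z_2\in X_{j+n-1}$, since $f_0^iz_\ell\in X_{j+n-1-i}$,
\[
\bigl|\log\{(f_0^{j-1})'(z_1)/(f_0^{j-1})'(z_2)\}\bigr|\ll\sum_{i=0}^{j-2}|f_0^iz_1-f_0^iz_2|\le\sum_{k=n+1}^{j+n-1}\Leb(X_k)\ll n^{-\alpha}.
\]
Exponentiating and combining with the MVT estimate gives $\{(f_0^{j-1})'(z)\}^{-1}=(n/(j+n))^{\alpha+1}(1+O(n^{-\kappa'}))$ uniformly on $X_{j+n-1}$; multiplying by the $f_1'$ factor yields the lemma.

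The main work is the distortion step. The key observation is that the $f_0$-orbit of a point in $X_{j+n-1}$ visits $X_k$ only for $k\ge n+1$, so the Adler sum telescopes into the tail $\sum_{k>n}k^{-(\alpha+1)}\ll n^{-\alpha}$, which is absorbed into $O(n^{-\kappa'})$ since $\kappa'<\alpha$. Uniformity in $j$ is then automatic because every bound depends only on $n$.
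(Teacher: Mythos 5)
Your proof is correct, and it reaches the estimate by a genuinely different route than the paper for the core factor $\{(f_0^{j-1})'\}^{-1}$. The paper computes $\log(f^j)'(y)=\log f_1'(y)+\sum_{k=1}^{j-1}\log f_0'(f^ky)$ directly: it plugs the asymptotic $f^ky=b'(j+n-k)^{-\alpha}+O(\cdot)$ from Proposition~\ref{prop:xn} into the expansion $\log f_0'(x)=b(1+1/\alpha)x^{1/\alpha}+O(x^\kappa)$, uses the algebraic identity $b(b')^{1/\alpha}=\alpha$ to simplify each term to $(\alpha+1)(j+n-k)^{-1}+O(\cdot)$, and then sums. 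You instead factor $f^j=f_0^{j-1}\circ f_1$, extract the main term $(n/(j+n))^{\alpha+1}$ from the mean value theorem via the ratio $\Leb(X_{j+n-1})/\Leb(X_n)$ (still drawing on Proposition~\ref{prop:xn}, but only through the interval lengths), and then transport it to all points of $X_{j+n-1}$ using a standard Adler/bounded-distortion sum $\sum_i|f_0^iz_1-f_0^iz_2|\le\sum_{m>n}\Leb(X_m)\ll n^{-\alpha}$. This avoids having to track the explicit constants $b,b',b''$ and the identity $b(b')^{1/\alpha}=\alpha$ in the log-derivative expansion; the tradeoff is that you need the MVT point plus a distortion argument rather than a single direct summation. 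Both approaches ultimately rest on the same inputs (Proposition~\ref{prop:xn}, Adler's condition, the $C^2$ bound and one-sided behaviour of $f_1$ near $\eta_1$), and your treatment of the $f_1'$ factor is essentially the same as the paper's final step. One small clean-up: to get $|\log f_0'(z_1)-\log f_0'(z_2)|\ll|z_1-z_2|$ from Proposition~\ref{prop:BD} you should note that $f_0'$ is bounded above on $[0,\eta_1]$ (it is continuous on a compact interval, and indeed tends to $1$ at the fixed point), so the passage between $|\{f_0'(z_1)\}^{-1}-\{f_0'(z_2)\}^{-1}|$ and the log-difference costs only a constant; worth a sentence but not a gap.
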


\begin{proof}
Let $y\in Y_{j+n}$.
By Proposition~\ref{prop:xn}, $f^ky\in X_{j+n-k}$ satisfies
\[
f^ky= b' (j+n-k)^{-\alpha}+O((j+n-k)^{-(\alpha+\kappa')}).
\]
Note that
\[
\log f_0'(x)= \log(1+b(1+1/\alpha)x^{1/\alpha}+O(x^\kappa))=
b(1+1/\alpha)x^{1/\alpha}+O(x^\kappa).
\]
Since $b(b')^{1/\alpha}=\alpha$, we obtain
\begin{align*}
\log f_0'(f^ky) & =
(\alpha+1)(j+n-k)^{-1}\big\{1+O((j+n-k)^{-\kappa'})\big\} + O((j+n-k)^{-\alpha\kappa}) 
\\ & =
(\alpha+1)(j+n-k)^{-1}+O((j+n-k)^{-(1+\kappa')}),
\end{align*}
for $1\le k\le j$.
Hence
\begin{align*}
\log (f^j)'(y) & =\log f_1'(y)+ \sum_{k=1}^{j-1} \log f_0'(f^ky)
\\ & = \log f_1'(y)+(\alpha+1)\sum_{k=n+1}^{j+n-1} (k^{-1}+O(k^{-(1+\kappa')}))
\\ & = \log f_1'(y) +(\alpha+1)\log \frac{j+n}{n}  +O(n^{-\kappa'}).
\end{align*}
It follows that
\(
(f^j)'(y) =  f_1'(y) \big(\frac{j+n}n\big)^{\alpha+1}
	\big( 1 + O(n^{-\kappa'})\big)
	\)
and hence
\[
\{(f^j)'(y)\}^{-1} =  \{f_1'(y)\}^{-1} \Big(\frac{n}{j+n}\Big)^{\alpha+1}
	\big( 1 + O(n^{-\kappa'})\big).
	\]
Finally, by Proposition~\ref{prop:BD} and the estimate $\Leb( Y_n )\ll n^{-(\alpha+1)}$, 
\[
        | \{f'(y)\}^{ -1 }
        - \{ f'( \eta_1 )\}^{ -1 } |
        \ll | y - \eta_1|
        \ll \sum_{k = n + j}^\infty k^{ - (\alpha + 1 )}
        \ll n^{-\alpha}\le n^{-\kappa'}
\]
and the result follows.
\end{proof}

\begin{cor}\label{cor:J}
	Let $J_{j,n}$ be defined as in Lemma~\ref{lem:pw0}. Then,
\[
\lim_{j \to \infty }\sum_{\eps j<n\le \eps^{-1}j} \sup_{X_{n}}|J_{j,n}- \alpha n^\alpha(j+n)^{-(\alpha+1)}|=0.
\]
\end{cor}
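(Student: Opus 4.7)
The plan is to substitute the explicit formula from Proposition~\ref{prop:J} and the asymptotics from Lemma~\ref{lem:f'} into the definition of $J_{j,n}$, showing that $J_{j,n}(x)=\alpha n^\alpha(j+n)^{-(\alpha+1)}(1+E_{j,n})$ with an error $E_{j,n}$ whose $n^{-\alpha}$-weighted sum is $o(j^{-\alpha})$. By Proposition~\ref{prop:J} (taking $q=q'=1$ and suppressing the trivial subscripts),
\[
J_{j,n}(x)=\frac{h(y_{j,n})\,|(f^j)'(y_{j,n})|^{-1}}{\sum_{\ell\ge1} h(y_{\ell,n})\,|(f^\ell)'(y_{\ell,n})|^{-1}}.
\]
Inserting Lemma~\ref{lem:f'}, which gives $|(f^\ell)'(y_{\ell,n})|^{-1}=\{f_1'(\eta_1)\}^{-1}(n/(\ell+n))^{\alpha+1}(1+O(n^{-\kappa'}))$ with the implied constant uniform in $\ell$, makes $\{f_1'(\eta_1)\}^{-1}n^{\alpha+1}$ cancel from top and bottom, leaving
\[
J_{j,n}(x)=\frac{h(y_{j,n})(j+n)^{-(\alpha+1)}}{\sum_{\ell\ge1}h(y_{\ell,n})(\ell+n)^{-(\alpha+1)}}\bigl(1+O(n^{-\kappa'})\bigr).
\]

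Next I would handle the density factors. Since $F$ is AFU, $h|_Y\in\BV(Y)$ and is bounded away from zero, so the right limit $h_\infty:=h(\eta_1^+)\in(0,\infty)$ exists. The sets $Y_m$ satisfy $Y_m\subset f_1^{-1}(X_{m-1})$, and by Proposition~\ref{prop:xn} they accumulate at $\eta_1$ from the right at rate $\ll m^{-\alpha}$; hence
\[
\omega(m):=\sup\bigl\{|h(y)-h_\infty|:y\in Y_k,\ k\ge m\bigr\}
\]
is nonincreasing with $\omega(m)\to 0$. Writing $h(y_{\ell,n})=h_\infty(1+O(\omega(n)))$ uniformly in $\ell\ge1$ makes $h_\infty$ cancel, and combining with the standard tail estimate $\sum_{\ell\ge1}(\ell+n)^{-(\alpha+1)}=\alpha^{-1}n^{-\alpha}(1+O(n^{-1}))$ yields, uniformly in $x\in X_n$ and $j\ge1$,
\[
J_{j,n}(x)=\alpha n^\alpha(j+n)^{-(\alpha+1)}\,(1+E_{j,n}),\qquad |E_{j,n}|\ll \omega(n)+n^{-\kappa'}+n^{-1}.
\]

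Setting $c_{j,n}=\alpha n^\alpha(j+n)^{-(\alpha+1)}$, the quantity in the corollary is then bounded by a constant times $j^\alpha\sum_{n\ge n_0}(j+n)^{-(\alpha+1)}\bigl(\omega(n)+n^{-\kappa'}+n^{-1}\bigr)$. For the two power-law contributions I would split the sum at $n=j$; since $\kappa'\in(0,\alpha)$, elementary power-summation bounds give $O(j^{-\kappa'})$ and $O(j^{-1}\log j)$ respectively, both $o(1)$ as $j\to\infty$. For the $\omega$ contribution I would split at $n=\lfloor\sqrt j\rfloor$: the head $n\le\sqrt j$ is $\ll \sup\omega\cdot\sqrt j\cdot j^{-(\alpha+1)}\cdot j^\alpha=O(j^{-1/2})$, while the tail $n>\sqrt j$ is dominated (using monotonicity of $\omega$) by $\omega(\sqrt j)\cdot j^\alpha\sum_n(j+n)^{-(\alpha+1)}=O(\omega(\sqrt j))\to 0$.

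The main obstacle I anticipate is the density step: BV alone does not yield a quantitative modulus of continuity for $h$ at $\eta_1^+$, but the monotonicity of $\omega$ combined with $\omega(m)\to 0$ is exactly what the $\sqrt j$-splitting in the final step requires, so the argument closes without any additional regularity assumption on $h$.
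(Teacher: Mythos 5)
Your proof is correct and follows the same route as the paper's: substitute the asymptotics of Lemma~\ref{lem:f'} and the one-sided continuity of $h$ at $\eta_1^+$ into the formula of Proposition~\ref{prop:J} to obtain $J_{j,n}\sim\alpha n^\alpha(j+n)^{-(\alpha+1)}$ as $n\to\infty$, uniformly in $j$ and $x\in X_n$. The paper's proof stops at ``the result follows'' once this uniform asymptotic is in hand, implicitly relying on exactly the argument you spell out --- the $\lfloor\sqrt j\rfloor$-split to handle the non-quantitative error $\omega(n)$ from BV regularity of $h$, and the elementary power-sum bounds $O(j^{-\kappa'})$ and $O(j^{-1}\log j)$ for the remaining errors --- so your write-up usefully makes that final summation step explicit.
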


\begin{proof}
Let $x\in X_n$, $y_{j,n}\in Y_{j+n}$ with $f^jy_{j,n}=x$. 
Note that $y_{j,n}\downarrow \eta_1$ as $n\to\infty$ uniformly in $j$ and $x\in X_n$.
Hence
$h(y_{j,n})\to h(\eta_1^+)$ as $n\to\infty$ uniformly in $j$ and $x\in X_n$ by
one-sided continuity of $h$.

Recall from Proposition~\ref{prop:J} that
\[
J_{j,n}(x)=\frac{
h(y_{j,n})\big\{(f^j)'(y_{j,n})\big\}^{-1}
}
{
\sum_{\ell=1}^\infty h(y_{\ell,n})\big\{(f^\ell)'(y_{\ell,n})\big\}^{-1}
}\;.
\]
By Lemma~\ref{lem:f'},
\[
h(y_{j,n})\{(f^j)'(y_{j,n})\}^{-1}\sim h(\eta_1^+)\{f_1'(\eta_1)\}^{-1}n^{\alpha+1}(j+n)^{-(\alpha+1)}
\]
as $n\to\infty$ uniformly in $j$ and $x\in X_n$, and
\[
\sum_{\ell=1}^\infty h(y_{\ell,n})\{(f^\ell)'(y_{\ell,n})\}^{-1}
\sim h(\eta_1^+)\{f_1'(\eta_1)\}^{-1}\alpha^{-1}n
\]
as $n\to\infty$ uniformly in $x\in X_n$. The result follows.
\end{proof}

By Corollary~\ref{cor:J}, condition~\eqref{eq:J} holds with
$c_{j,n}=\alpha n^\alpha(j+n)^{-(\alpha+1)}$, completing the proof of global-local mixing.
\end{example}

\begin{example} \label{ex:Qbranch}
We generalise the previous example to one with several uniformly expanding branches.

Fix $\alpha\in(0,1]$, $b>0$ and $0=\eta_0<\eta_1<\eta_2<\dots<\eta_{Q+1}=1$
where ${Q\ge1}$.
We suppose that the branches $f_r=f|_{[\eta_r,\eta_{r+1}]}$ 
are orientation preserving with $f_r(\eta_r)=0$ for all $r=0,\dots,Q$.
Suppose that $f_0$ has a neutral fixed point at $0$ as in Example~\ref{ex:2branch} and is uniformly expanding away from zero, and that the remaining branches $f_1,\dots,f_Q$ are uniformly expanding.
We require that $f_0\eta_1=\eta\in(f_Q^{-1}\eta_Q,1]$ and set $X=[0,\eta]$.

Again, 
$f:X\to X$ is conservative and exact with at least one full branch $f_0$.
There is a 
unique (up to scaling) $\sigma$-finite absolutely continuous invariant measure $\mu$, and $\mu(X)=\infty$.

We take $Y=[\eta_1,\eta]$ and note that $F=f^\tau:Y\to Y$ is a mixing AFU map.
In this example, 
$f^j|_{Y_{j+n}}$ is $Q$-to-$1$.
Setting $Y_{n,r}=Y_n\cap [\eta_r,\eta_{r+1}]$ for $1\le r\le Q$, we obtain for $n_0\ge1$ sufficiently large that $f^j$ maps $Y_{j+n,r}$ bijectively onto $X_n$ for all 
$j\ge1$, $n\ge n_0$, $1\le r\le Q$.
We take $q=1$, $\bbA=\{1,\dots,Q\}$ and suppress subscripts $p$.

Write $X_n=[x_{n+1},x_n]$ where $x_n\downarrow0$.
The statement and proof of Proposition~\ref{prop:xn} goes through as before.
In particular, $\Leb(Y_{n,r})\sim f_r'(\eta_r)^{-1}\Leb(X_n)\sim 
f_r'(\eta_r)^{-1} b''\,n^{-(\alpha+1)}$ 
(with $b''=\alpha^{\alpha+1}b^{-\alpha}$)
and 
$\mu(Y_{n,r})
\sim 
 h(\eta_r^+) f_1'(\eta_r)^{-1} b''
n^{-(\alpha+1)}$,
verifying condition~\eqref{eq:Y}.

Following the proof of Lemma~\ref{lem:f'}, we obtain that
the diffeomorphism $f^j|_{Y_{j+n,r}}:Y_{j+n,r}\to X_n$ satisfies
\[
	\{(f^j)'\}^{-1}= \{f_r'(\eta_r)\}^{-1} \Big(\frac{n}{j+n}\Big)^{\alpha+1}\big(1+O(n^{-\kappa'})\big).
\]
Substituting into Proposition~\ref{prop:J},
\[
\sum_{r=1}^Q h(y_{j,n,r})\{(f^j)'(y_{j,n,r})\}^{-1}\sim \sum_{r=1}^Q h(\eta_r^+)\{f_r'(\eta_r)\}^{-1}n^{\alpha+1}(j+n)^{-(\alpha+1)}
\]
as $n\to\infty$ uniformly in $j$ and $x\in X_n$, and
\[
\sum_{r=1}^Q\sum_{\ell=1}^\infty h(y_{\ell,n,r})\{(f^\ell)'(y_{\ell,n,r})\}^{-1}
\sim \sum_{r=1}^Q h(\eta_r^+)\{f_r'(\eta_r)\}^{-1}\alpha^{-1}n
\]
as $n\to\infty$ uniformly in $x\in X_n$.
Hence we again obtain that
$J_{j,n}\sim \alpha n^\alpha(j+n)^{-(\alpha+1)}$ as $n\to\infty$ uniformly in $j$ on $X_n$, verifying condition~\eqref{eq:J}.
\end{example}

\begin{example} \label{ex:genPM}
A set of examples parallel to those in Example~\ref{ex:Qbranch}
is given by AFN maps $f:[0,1]\to[0,1]$ with $Q+1$ branches, $Q\ge1$, and a neutral fixed point at $0$ as before, but now we assume that the first $Q$ branches are full and the last branch is possibly not full.
The analysis is the same as in Example~\ref{ex:Qbranch} with the simplification that $X=[0,1]$, so we omit the details.

This includes the case of Pomeau-Manneville maps 
$fx=x+b x^{1+1/\alpha} \bmod1$ for all $b\in[1,\infty)$. Note that the number of branches is given by~$\lceil b\rceil$.
Previously global-local mixing was studied by~\cite{BonannoLenci21} for the full-branch case with $b$ an integer, but we do not make this restriction.
\end{example}

\begin{example}\label{ex:Farey}
In the previous examples, we assumed for convenience that all branches were orientation preserving. We can easily handle a mixture of branches that preserve and reverse orientation. If a uniform branch $f_r=f|_{[\eta_r,\eta_{r+1}]}$ in Example~\ref{ex:Qbranch} is orientation reversing with $f_r(\eta_{r+1})=0$, then the arguments go through with
$f_r'(\eta_r)$ and $h(\eta_r^+)$ replaced by
$f_r'(\eta_{r+1})$ and $h(\eta_{r+1}^-)$.

In particular, we obtain full global mixing for the 
  Farey map $f : X \to X$. Here, $X=[0,1]$ and there are two full branches of opposite orientations given by
\[
      f_0(x)=\dfrac{ x }{ 1 - x },\quad  x \in [0,\tfrac12], \qquad\text{and}\qquad
      f_1(x)=\dfrac{ 1 - x }{ x },\quad  x \in (\tfrac12,1].
\]
This is an AFN map with a single neutral fixed point at $x=0$ and with $\alpha=1$.
\end{example}

\subsection{Maps with several neutral fixed points}
\label{sec:several}

In this subsection we consider full-branch AFN maps with $d\ge2$ equally sticky neutral fixed points $\xi_1,\dots,\xi_d$ satisfying~\eqref{eq:xi}.
In Example~\ref{ex:thaler}, we consider an example with~$d$ branches, each containing a neutral fixed point. 
In Example~\ref{ex:thaler-D>d}, we consider a generalisation where there can be finitely or infinitely many uniformly expanding branches in addition to the $d$ branches with neutral fixed points.

\begin{example}\label{ex:thaler}
Let $X=[0,1]$. Let $d\ge 2$ and  suppose that
$X=\bigcup_{k=1}^d I_k$ is a partition mod $0$ into $d$ open subintervals.
We suppose that the branches $f_k=f|_{I_k}$ are orientation preserving
and full for $k=1,\dots d$ with $f'\ge1$.
Suppose that the $d$ fixed points $\xi_1,\dots,\xi_d$ satisfy~\eqref{eq:xi} with $\alpha\in(0,1]$.
As usual, we assume Adler's condition and that $f'>1$ away from the fixed points.

We construct the inducing domain $Y$ following~\cite[Section~7]{Sera20} (see also~\cite[Section~4.1]{CMTsub}).
  The construction depends on whether $d = 2$ or $d \ge 3$. 
When $d = 2$, we take $Y = [ x, f x ]$ where $\{x, fx \}$ is the unique orbit of period $2$. When $d \ge 3$, we take $Y = \bigcup_{ k = 1 }^d (I_k \setminus  f^{ -1 } I_k )$. 
From now on, we focus on the case $d\ge3$.

The induced map $F=f^\tau:Y\to Y$ is topologically mixing and hence mixing (see for example~\cite[Proposition 4.2]{CMTsub}).
Moreover $F^3$ has full branches so it follows from Remark~\ref{rmk:thaler} that the density $h$ is Lipschitz.

We take $|\bbA|=d(d-1)$, $q=d$ and $n_0=1$.
Set 
\begin{equation} \label{eq:exX}
X_{n,p}=\{x\in X_n\cap I_p:f^nx\in I_p\},\quad 1\le p\le q,\;n\ge1.
\end{equation}
For each $1\le p\le q$, we note that
$Y_n\cap f^{-1}I_p$ is the disjoint union of $d-1$ intervals $Y_{n,p,k}\subset I_k$ where $1\le k\le d$, $k\neq p$.
To conform with the abstract setup, write
 $\bbA=\{r=(p,k):1\le k,p\le q,\,k\neq p\}$. 
Then we obtain decompositions 
$X_n=\bigcup_{p=1}^q X_{n,p}$, 
$Y_n=\bigcup_{p=1}^q \bigcup_{\psi(r)=p} Y_{n,r}$ where
$\psi^{-1}(p)=\{(p,k):1\le k\le q,\,k\neq p\}$ and $Y_{n,(k,p)}\subset I_k$.
  
  We can use Proposition~\ref{prop:xn} as before to show that
  $\Leb(X_{ n,p }) \sim b''_p n^{ - ( \alpha + 1 ) }$ where $b''_p = \alpha^{ \alpha + 1 }b_p^{ -\alpha }$.
  Let $\zeta_r$ be the accumulation point of the intervals $Y_{ n,r }$ and notice that $f \zeta_r =  \xi_p$. Thus, $\Leb(Y_{ n, r }) \sim f'(\zeta_r)^{ -1 } b''_p n^{ -(\alpha + 1) }$ and so
  $\mu (Y_{ n,r }) \sim h ( \zeta_r )  f'( \zeta_r )^{ -1 } b''_p n^{ - (\alpha + 1 ) } $ verifying condition~\eqref{eq:Y}.

  Following the proof of Lemma~\ref{lem:f'}, we find for each $p$, $k$ that the diffeomorphism $f^j : Y_{ j + n , r } \to X_{n,p}$ satisfies
	\[
		\{(f^j)'\}^{-1} =  \{f_r'(\zeta_r)\}^{-1} \Big(\frac{n}{j+n}\Big)^{\alpha+1}
		\big( 1 + O(n^{-\kappa'})\big)
		.
	\]
  Hence, proceeding in the same way as Corollary~\ref{cor:J}, we obtain for each $p$ that
  $J_{ j,n,p } \sim \alpha n^{ \alpha } ( j + n )^{ - ( \alpha + 1 )}$
as  $n\to\infty$ uniformly in $j$ on $X_{n,p}$,
  verifying~\eqref{eq:J}.
\end{example}

\begin{example}\label{ex:sing}
	We may also treat the interval maps studied in~\cite{CoaLuz2024,CoaLuzMub2023}. These are interval maps consisting of two full orientation preserving branches, each with a neutral fixed point. Unlike in Example~\ref{ex:thaler}, these maps are not AFN as each branch may have either a critical or singular point at the discontinuity. The set of maps we can treat is slightly more general than those introduced in~\cite{CoaLuzMub2023}. A precise definition of this class of maps is given in~\cite[Section 4.2]{CMTsub} with the slight modification that we impose the stronger condition~\eqref{eq:xi} for the expansion near the fixed points. The definition of the set $Y$, and the verification of~\eqref{eq:Y} and~\eqref{eq:K} can be found in~\cite[Section 4.2]{CMTsub}. With~\eqref{eq:Y} established, one can verify~\eqref{eq:J} in the same way as in Example~\ref{ex:thaler} with $d=q=|\bbA|=2$. 
\end{example}

\begin{example}\label{ex:thaler-D>d}
  Fix $D>d\ge1$, where we allow the possibility that $D=\infty$.
  Set $X=[0,1]$ with a partition mod $0$ into $D$ open subintervals $I_1, I_2,\ldots$.
  Assume that the branches $f_k=f|_{I_k}$ are orientation preserving and 
  full for each
  $k\ge1$ and that $f'\ge1$.
  For $k = 1, \ldots, d$ we suppose that the fixed points $\xi_k\in
  I_k$ satisfy~\eqref{eq:xi} for some $\alpha \in (0,1]$.
  As before we assume Adler's condition and that $f' x > 1$ for all
  $x \not \in \{ \xi_1, \ldots, \xi_d \}$.

  Set $Y = \big( \bigcup_{ i = 1 }^d (I_k \setminus  f^{ -1 } I_k)
  \big) \cup \big( \bigcup_{  i = d + 1 }^D I_k \big)$.
  (Unlike in Example~\ref{ex:thaler}, it is no longer necessary to
  distinguish low values of $d$.)

  We take $q = d$, $|\bbA|= d(D - 1)$, $n_0=1$.
  Define $X_{n,p}$ as in~\eqref{eq:exX}.
  For each $1\le p\le q$, note that
  $Y_n\cap f^{-1}I_p$ is the disjoint union of $D-1$ intervals
  $Y_{n,p,k}\subset I_k$ where $1\le k\le D$, $k\neq p$.
  In this way, we obtain decompositions $Y_n=\bigcup Y_{n,p,k}$ and
  $X_n=\bigcup X_{n,p}$ such that $f^j$ maps $Y_{j+n,p,k}$
  bijectively onto $X_{n,p}$ for all $j,n\ge1$, $1\le p\le q$,
  $1\le k\le D$, $k\neq p$.

  For $n\ge1$, $1\le p\le q$, $k\neq p$, we have  $F  Y_{ n,p,k }
  = f^n  Y_{ n,p,k }  = I_p  \setminus  f^{ -1 } I_p$,
  $F^2 Y_{n,p,k}=Y\setminus I_p$ and
  $F^3 Y_{n,p,k}=Y$.
  For the remaining partition elements $Y_{1,p,k}$ with $p>q$,
  $k\ge1$, we have
  $F Y_{1,p,k}=I_p$ and $F^2 Y_{1,p,k}=Y$.
  Hence $F$ is mixing.
  By Remark~\ref{rmk:thaler},
  the density $h$ is Lipschitz on the images of $F$.

  To calculate $\mu(Y_{ n,p,k })$ we proceed as in
  Example~\ref{ex:thaler} but this time we have to take care about
  the error terms which appear in the asymptotics 
  to deal with the case $D = \infty$.

  Write $|E|=\Leb(E)$ for $E\subset X$ measurable.

  \begin{cor} \label{cor:BD}
                $\sup_{I_k}(f')^{-1}\ll |I_k|$.
  \end{cor}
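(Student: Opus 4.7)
The plan is to combine the full-branch property with the bounded distortion estimate from Proposition~\ref{prop:BD}. Since each $f_k = f|_{I_k}$ is a full orientation-preserving branch mapping $I_k$ onto $X = [0,1]$, the change of variables formula gives $\int_{I_k} f'(x)\,dx = 1$, so the mean value of $f'$ on $I_k$ is $1/|I_k|$. By the mean value theorem there exists $x_k \in I_k$ with $f'(x_k) = 1/|I_k|$, i.e., $(f'(x_k))^{-1} = |I_k|$.

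Now apply Proposition~\ref{prop:BD} on the interval $I_k \in \beta$: for every $x \in I_k$,
\[
|(f'(x))^{-1} - (f'(x_k))^{-1}| \le Me^{2M}|x - x_k| \le Me^{2M}|I_k|.
\]
Combining with $(f'(x_k))^{-1} = |I_k|$ yields $(f'(x))^{-1} \le (1 + Me^{2M})|I_k|$ for all $x \in I_k$, which is exactly the claim with implied constant $1 + Me^{2M}$.

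There is no real obstacle here since the corollary is essentially a packaging of Proposition~\ref{prop:BD} with the fullness of the branches; the only subtlety is remembering to invoke fullness (to get a point where $(f')^{-1}$ equals $|I_k|$ exactly) rather than trying to bound $\sup (f')^{-1}$ directly from $\inf f' \ge 1$, which would give only $\sup (f')^{-1} \le 1$ and miss the $|I_k|$ factor that matters when $D = \infty$ and $|I_k|$ can be arbitrarily small.
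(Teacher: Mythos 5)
Your proof is correct and takes essentially the same approach as the paper: both use the full-branch property $|fI_k| = |X| = 1$ to produce, via the mean value theorem, a point in $I_k$ where $(f')^{-1}$ equals $|I_k|$ exactly, and then invoke Proposition~\ref{prop:BD} to transfer this to the whole interval. Your closing remark about why the naive bound $\inf f' \ge 1$ is insufficient is a sound observation, though not part of the paper's proof.
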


  \begin{proof}
    By the mean value theorem, there exists $y_k\in I_k$ such that
    $1=|X| = |fI_k| = f'(y_k)|I_k|$.
    The result follows from Proposition~\ref{prop:BD}.
  \end{proof}

        We emphasise that throughout this example, all the constants implied by the use of big O or $\ll$ are
        independent of $n$ and $k$ and the appropriate domain contained in~$X$.

  Fix $p\in\{1,\dots,d\}$ and let $r=(p,k)$ where $1\le k\le D$ and $k\neq p$.
  We can again use Proposition~\ref{prop:xn} to show that $| X_{ n,
  p  } | \sim b''_p n^{ -(\alpha + 1) }$ as $n\to\infty$.
  As in Example~\ref{ex:thaler}, let $\zeta_r \in I_k$ be the
  accumulation point of the intervals $Y_{ n,r }$.
  By the mean value theorem, there exists $y_{ n,r } \in Y_{n,r}$ such that
  $|X_{ n-1,p }| = |fY_{n,r}|=f'( y_{ n,r })|Y_{n,r}|$.
  By Corollary~\ref{cor:BD},
\[
    |Y_{n,r}|\ll |I_k| |X_{n-1,p}|\ll |I_k|n^{-(\alpha+1)}.
\]
  Hence, by Proposition~\ref{prop:BD},
  \begin{equation}\label{eq:sum-Yn}
    |  \{f' ( y_{ n,r })\}^{-1} - \{f'( \zeta_r)\}^{-1} |
    \ll | y_{n,r}- \zeta_r |
    \ll \sum_{ i = n }^{ \infty } |Y_{ i, r }|
    \ll |I_k|n^{-\alpha},
  \end{equation}
  and so
        \begin{equation}\label{eq:D>d-Y}
        |Y_{n,r}| = \{f'(y_{n,r})\}^{-1}|X_{n-1,p}|
=\{ f'( \zeta_r )\}^{-1}|X_{ n-1 , p }|+ O( |I_k|n^{-(\alpha+1)}).
        \end{equation}
  Since $h$ is Lipschitz on $Y$,
  \[
    \big| \mu ( Y_{ n, r } ) - h ( \zeta_r) | Y_{ n,r }| \big|
    \leq \int_{ Y_{ n,r } }| h - h ( \zeta_r )| \, d\Leb
    \ll | Y_{ n, r } | \sum_{ i = n }^{ \infty } |Y_{ i, r }|
    \ll |I_k|n^{-(2\alpha+1)}.
  \]
  Hence
\[
    \big| \mu ( Y_{ n, r } ) - h  ( \zeta_r)\{f'(\zeta_r)\}^{-1} | X_{ n-1,p }| \big|
    \ll |I_k|n^{-(2\alpha+1)}.
\]
  It follows that
  $\mu ( Y_{ n,r } ) \sim \gamma_r n^{ -( \alpha + 1) }$
  as $n\to\infty$ uniformly in $r$
  where $\gamma_r=h ( \zeta_r ) \{f'(\zeta_r)\}^{-1}b''_p$.
  Also $h ( \zeta_r ) \{f'(\zeta_r)\}^{-1}\ll |I_k|$ which is summable,
  so we have shown that for each $p\in\{1,\dots,q\}$,
  \[
    \sum_{k\neq p}\mu(Y_{n,p,k})
    =\omega_p|X_{n-1,p}|+O(n^{-(2\alpha+1)})
    \sim\omega_pb''_p n^{-(\alpha+1)}
    \quad\text{as $n\to\infty$},
  \]
  where $\omega_p=\sum_{k\neq p} h( \zeta_r )
  \{f'(\zeta_r)\}^{-1}\in(0,\infty)$.
  Hence
  we have verified condition~\eqref{eq:Y}.

  We now move on to verifying~\eqref{eq:J}.
        Again, we fix $p\in\{1,\dots,q\}$ and consider the $D-1$ indices
  $r=(p,k)$ where $k\neq p$.
  Let $x\in X_{ n,p }$  and let $y_{j,n,r} = y_{j,n,r}(x) \in Y_{ j + n , r }\in I_k$
  be such that $f^j y_{j,n,r} = x$.
        We first follow the proof of Lemma~\ref{lem:f'}.
Note that $f^ky_{j,n,r}=f_p^{-(j-\ell)}x$ for $k=1,\dots,j$ and hence is independent of $r$.
Hence, when applying Proposition~\ref{prop:xn}, we are justified in writing
        $\log f' (f^k  y_{j,n,r}) = (\alpha + 1) ( j + n - k)^{-1} ( 1 +  O ( (j + n - k)^{ - (1 + \kappa')})$ for $k=1,\dots,j$. This in turn yields 
  \[
    \log (f^j)' (y_{j,n,r})
    = \log f' ( y_{j,n,r})
    + ( \alpha + 1 ) \log \frac{ j + n }{ n } + O ( n^{-\kappa'} )
    .
  \]
  By Proposition~\ref{prop:BD} and~\eqref{eq:D>d-Y},
  \[
	  | \{f'(y_{j,n,r})\}^{ -1 }
    - \{ f' (\zeta_r )\}^{ -1 } |
    \ll | y_{j,n,r} - \zeta_r |
    \ll \{f' (\zeta_r)\}^{-1}\sum_{i = n + j}^\infty i^{ - (\alpha + 1 )}
    \ll \{f' (\zeta_r)\}^{-1} n^{-\alpha}
    .
  \]
  Thus, we obtain
\[
	  \{( f^j )' (y_{j,n,r})\}^{-1}
	  =\{f' \zeta_r )\}^{ -1 } 
	   \Big( \frac{ n }{ j +  n } \Big)^{ \alpha + 1 }
	  \big( 1 + O(n^{ - \kappa'}) \big)
    .
\]
Since $h$ is Lipschitz, it follows from~\eqref{eq:sum-Yn} that
  $|h(\zeta_r) - h ( y_{j,n,r}) | = O ( n^{-\alpha})$, so
  \[
	  h(y_{j,n,r}) \{(f^j)' (y_{j,n,r})\}^{ -1 }
    = n^{ \alpha + 1 } ( j + n )^{ -( \alpha + 1 ) }
    h( \zeta_r ) \{f'( \zeta_r )\}^{ -1 } ( 1 + O(n^{-\kappa'}) )
    .
  \]
  Hence
\[
	  \sum_{k \neq p } h(y_{j,n,r}) \{(f^j)' (y_{j,n,r})\}^{ -1 }
    = \omega_p n^{ \alpha + 1 } ( j + n )^{ -( \alpha + 1 ) } (
      1 +  O(n^{-\kappa'})
    ),
\]
and it follows that
\[
    \sum_{ \ell = 1 }^{ \infty }\sum_{k\neq p} 
	  h(y_{ \ell,n,r }) \{(f^{ \ell })' (y_{ \ell,n,r })\}^{ -1 }
=
    \omega_p \alpha^{ -1 } n ( 1 +  O(n^{-\kappa'})).
\]
       Hence 
$J_{ j,n,p } = \alpha n^{ \alpha } ( j + n )^{ - ( \alpha + 1 ) }( 1 +  O(n^{-\kappa'}))$, verifying condition~\eqref{eq:J}.
\end{example}

\subsection{A multidimensional intermittent map}
\label{sec:EMV}

\begin{example} \label{ex:EMV}
We consider a family of multidimensional intermittent maps
introduced in~\cite{EMV21}.
They are nonMarkovian and highly nonconformal (exponentially expanding in one direction and intermittent in the other direction).
We stick to the main case in~\cite{EMV21} of two-dimensional maps
$f:[0,1]\times\T\to[0,1]\times\T$ (where $\T=\R/\Z$), but this generalises to higher-dimensional examples with a general uniformly expanding second branch as discussed in~\cite[Remark~1.7]{EMV21}.

The maps $f$ in~\cite{EMV21} take the form 
$f(x,\theta)=f_0(x,\theta)$ for $0\le x\le \frac34$ and 
$f(x,\theta)=f_1(x,\theta)$ for $\frac34<x\le1$ where
\begin{align*}
f_0(x,\theta) & = (x(1+x^{1/\alpha}u(x,\theta)),\,4\theta\bmod1), \\
f_1(x,\theta) & = (4x-3,\,4\theta\bmod1) .
\end{align*}
Here, $u:[0,\frac34]\times\T\to(0,\infty)$ is a $C^2$ function satisfying
$u(0,\theta)\equiv c_0>0$ and $f_{0,1}(\frac34,\theta)>\frac{15}{16}$ for all $\theta\in\T$. In addition, it is assumed that $|x\partial_x u|_\infty$ and
$|\partial_\theta u|_\infty$ are sufficiently small. As usual, $\alpha\in(0,1]$.
This example has one neutral invariant circle $\Gamma=\{0\}\times \T$ with uniformly expanding dynamics on this circle.

As in the previous examples, we assume strengthened expansions near the neutral circle, namely that $u(x,\theta)=c_0+O(x^{\kappa-1/\alpha})$ uniformly in $\theta$, where $\kappa>1/\alpha$.

Let $X=\bigcup_{i=0}^3 f([0,\tfrac34]\times[\tfrac{i}{4},\tfrac{i+1}{4}])$.
By~\cite[Proposition~2.2]{EMV21}, $f$ restricts to a topologically exact map $f:X\to X$.
Moreover, by~\cite[Lemma~3.4]{EMV21}, there is a unique (up to scaling) absolutely continuous $f$-invariant $\sigma$-finite measure $\mu$ on $X$, and $\mu(X)=\infty$. 

For such maps, we prove global-local mixing, taking
$g\in L^\infty(X)$ to be a global observable if
there exists $\bar g\in\R$ such that
\[
\lim_{\eps\to0^+}\frac{\int_{X\setminus[0,\eps]\times\T}g\,d\mu}{
\mu(X\setminus[0,\eps]\times\T)}=\bar g.
\]

We induce on the set $Y=([\tfrac34,1]\times\T)\cap X$ noting that $\mu(Y)\in(0,\infty)$.
By~\cite[Lemma~3.2]{EMV21}, the induced map $F=f^\tau:Y\to Y$ is  mixing.
By~\cite[Proposition~4.11]{EMV21}, there exists $\gamma>0$ such that
$\mu(Y_n)\sim \alpha\gamma n^{-(\alpha+1)}$ verifying
condition~\eqref{eq:Y}.
Condition~\eqref{eq:K} follows directly from~\cite[Theorem~1.5(a)]{EMV21}.

\begin{prop} \label{prop:exact}
$f$ is conservative and exact.
\end{prop}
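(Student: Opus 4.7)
\emph{Conservativity.} Since $F = f^\tau$ preserves the finite measure $\mu|_Y$, Poincar\'e recurrence gives that $F$ is conservative, whence $\tau<\infty$ $\mu$-a.e.\ on $Y$ and $\mu$-a.e.\ orbit starting in $Y$ visits $Y$ infinitely often under $f$. To pass from $Y$ to all of $X$ I would invoke ergodicity of $f$, a standard consequence of the uniqueness (up to scaling) of the $\sigma$-finite absolutely continuous invariant measure~\cite[Lemma~3.4]{EMV21}: any $f$-invariant set of intermediate measure would produce a second, non-proportional invariant measure. The set $C = \{x \in X : f^n x \notin Y \text{ for all } n\ge 1\}$ satisfies $f^{-1}C \supseteq C$, and a short calculation identifies $f^{-1}C \setminus C$ with $f^{-1}\{y\in Y:\tau(y)=\infty\}$, which is $\mu$-null by $f$-invariance and conservativity of $F$. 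Thus $C$ is $f^{-1}$-invariant modulo null sets. Being disjoint from $Y$ (which has positive measure), $C$ cannot be conull, and ergodicity forces $\mu(C)=0$. Hence $\tau<\infty$ $\mu$-a.e.\ on $X$, and conservativity on $Y$ propagates to all of $X$.

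\emph{Exactness.} Mixing of $F$ alone does not in general imply exactness, so I would use more of~\cite{EMV21} than just Lemma~3.2: the transfer-operator analysis that yields~\eqref{eq:L} through~\cite[Theorem~1.5(a)]{EMV21} in fact provides a spectral gap for the transfer operator of $F$ on an appropriate Banach space, which gives that $F$ is exact as a probability-preserving map on $(Y,\mu|_Y)$. Exactness then lifts from $F$ to $f$ by a standard tower argument: given a tail set $A \in \bigcap_{n\ge 0} f^{-n}\cB$, one verifies that $A\cap Y$ lies in the $F$-tail $\bigcap_{n\ge 0} F^{-n}\cB_Y$, so $\mu(A\cap Y)\in\{0,\mu(Y)\}$ by exactness of $F$; ergodicity of $f$ then propagates the dichotomy to $\mu(A)\in\{0,\mu(X)\}$, giving exactness of $f$.

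\emph{Main obstacle.} The principal technicality is the tower lift in the exactness step. Because $\tau$ is non-constant, identifying $A\cap Y$ as an element of the $F$-tail requires stratifying $Y$ by the level sets of the cumulative return times $\tau_n = \tau + \tau\circ F + \cdots + \tau\circ F^{n-1}$ and using $f$-invariance of $\mu$ on each stratum. This is classical (see~\cite{Aaronson}) and should go through cleanly here since $\{\tau=n\}$ has positive measure for all sufficiently large $n$, supplying the aperiodicity needed. Should this prove thornier than expected, a fallback would be to work directly on $X$, combining~\eqref{eq:L} with the tail-triviality characterisation of exactness to conclude without passing through $F$.
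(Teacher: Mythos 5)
Your conservativity argument reaches the right conclusion but is more roundabout than necessary: the paper simply notes that $\tau<\infty$ $\mu$-a.e.\ on all of $X$ is established directly in~\cite{EMV21}, whence conservativity is immediate from Maharam's theorem~\cite{Maraham64} (see~\cite[Theorem~1.1.7]{Aaronson}). Your detour through Poincar\'e recurrence on $Y$, ergodicity of $f$ via uniqueness of the invariant measure, and the inclusion--exclusion analysis of the non-returning set $C$ is workable but adds several moving parts that the cited result renders superfluous.

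The exactness step contains a genuine gap. You propose to deduce exactness of $f$ from exactness of $F=f^\tau$ via ``a standard tower argument,'' asserting that a tail set $A\in\bigcap_n f^{-n}\cB$ has $A\cap Y$ in the $F$-tail, and that stratifying by cumulative return times makes this work. But exactness of a first return map does \emph{not} in general imply exactness of the ambient map: take $X=Y\times\{0,1\}$ with $f(y,0)=(y,1)$, $f(y,1)=(Fy,0)$; then $F$ is the return map to $Y\times\{0\}$ with $\tau\equiv2$, yet $Y\times\{0\}=f^{-2}(Y\times\{0\})$ is a nontrivial tail set for $f$. You acknowledge that aperiodicity of $\tau$ is ``needed,'' but you neither explain how it enters your stratification argument nor supply a reference for the asserted lifting theorem (there is no such result in~\cite{Aaronson} under the hypotheses you invoke, and your ``fallback'' of combining~\eqref{eq:L} with tail-triviality is likewise unsubstantiated). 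Crucially, $F^{-1}C=\bigcup_n(Y_n\cap f^{-n}C)$ mixes different powers of $f^{-1}$ across $Y$, so $A\cap Y=f^{-m}B_m\cap Y$ being in the $F$-tail is not a formality.

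The paper sidesteps this entirely. It uses that $f$ is modelled by an \emph{aperiodic} Young tower $f_\Delta$ with $\mu_\Delta(\{R=1\})>0$, so that $Z\subset f_\Delta Z$; bounded distortion of the Gibbs--Markov return map then gives $\mu_\Delta(E\cap f_\Delta E)>0$ whenever $E\subset Z$ fills most of $Z$; combining with~\cite[Lemma~5]{Young98} produces, for any positive-measure $A$, an $n$ with $\mu_\Delta(f_\Delta^nA\cap f_\Delta^{n+1}A)>0$; and exactness then follows from~\cite[Lemma~2.1]{MiernowskiNogueira13}. The aperiodicity is used concretely (via $\mu_\Delta(\{R=1\})>0$) rather than appealed to as a black-box lifting principle. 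To repair your approach you would need to either prove the lifting theorem you invoke (nontrivial) or switch to a direct criterion on $f$ such as the one the paper uses.
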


\begin{proof}
The first hit time $\tau:X\to\Z^+$ for $Y$ is shown directly in~\cite{EMV21} to be finite a.e., and
conservativity follows immediately by~\cite{Maharam64} (see~\cite[Theorem~1.1.7]{Aaronson}).

By~\cite[Lemma~3.1]{EMV21}, $f:X\to X$ is modelled by an aperiodic (that is, topologically mixing) Young tower $(\Delta,\mu_\Delta,f_\Delta)$.
Recall that $\Delta=\{(z,\ell):z\in Z,\,0\le \ell\le R(z)-1\}$ where $R:Z\to\Z^+$ is a return time (not necessarily the first return under $f$) to a subset $Z\subset Y$ with $0<\mu(Z)<\infty$.
Moreover, by the proof of~\cite[Lemma~3.1]{EMV21}, $\mu_{\Delta}(\{(z,0):R(z)=1\})>0$.
Under these conditions, we
give a simple argument for exactness of $f_\Delta$, and hence $f$, following~\cite{Lenci16,Young98}.
(We note that~\cite[Lemma~5]{Young98} already establishes exactness of $f_\Delta$ in the finite measure case.)

By definition of a (one-sided) Young tower, the return map $f_\Delta^R:Z\to Z$ is a full-branch Gibbs-Markov map. Identifying $Z$ with the base of the tower, it follows from our assumption on $R$ that $Z\subset f_\Delta Z$. Using bounded distortion of  $f_\Delta^R$ it follows that there exists $\eps>0$ such that
$\mu_\Delta(E\cap f_\Delta E)>0$ for any $E\subset Z$ with $\mu_\Delta(E)>(1-\eps) \mu_\Delta(Z)$.

Let $A\subset\Delta$ with $\mu_\Delta(A)>0$.
By the proof of~\cite[Lemma~5]{Young98}, there exists $n\ge1$ such that
$\mu_\Delta(Z\cap f_\Delta^nA)>(1-\eps) \mu_\Delta(Z)$. Hence
\[
\mu_\Delta(f_\Delta^nA\cap f_\Delta^{n+1} A) \ge
\mu_\Delta(Z\cap f_\Delta^nA\cap f_\Delta(Z\cap f_\Delta^nA) )>0.
\]
Exactness follows by~\cite[Lemma~2.1]{MiernowskiNogueira13}.
\end{proof}

The abstract setup from Section~\ref{sec:main} has to be generalised slightly: we can take $q=1$ but the value of $|\bbA|$ now depends on $j$ since the second coordinate of $f^j$ is $4^j$-to-one.
Specifically, for each $j\ge1$, $n\ge n_0$, we divide 
$Y_{j+n}$ into subsets 
\[
 Y_{j,n,r}=\{(y,\theta)\in Y_{j+n}: (r-1) 4^{-j}\le \theta \le r^{-j}\}, \quad
 1\le r\le q_j=4^j,
\]
so that $f^j$ restricts to a bijection from $Y_{j,n,r}$ onto $X_n$.
Sums of the form $\sum_{\psi(r)=p}$ are replaced by $\sum_{r=1}^{4^j}$.
In particular,~\eqref{eq:J} becomes
\begin{equation} \label{eq:tildeEMV}
\lim_{j\to\infty}j^\alpha\sum_{n=n_0}^\infty\sum_{r=1}^{4^j} \int_{Y_{j,n,r}}g\circ f^j\,d\mu=0
\end{equation}
for all $g\in L^\infty$ satisfying $\int_{X_n}g\,d\mu=0$ for all $n\ge n_0$.
The counterpart of~\eqref{eq:J} is
that there are constants
$c_{j,n}>0$ such that
\begin{equation} \label{eq:JEMV}
\sum_{n=n_0}^\infty n^{-\alpha}\sup_{X_n}|J_{j,n}-c_{j,n}|=o(j^{-\alpha})
\quad\text{as $j\to\infty$}
\end{equation}
where
\[
J_{j,n}(x)=\frac{\sum_{r=1}^{4^j}h(y_{j,n,r})(\det Df^j(y_{j,n,r}))^{-1}}
{\sum_{\ell=1}^\infty\sum_{r=1}^{4^\ell} h(y_{\ell,n,r})(\det Df^\ell(y_{\ell,n,r}))^{-1}}
\]
for $x\in X_n$ and $y_{\ell,n,r}\in Y_{\ell,n,r}$ with $f^jy_{\ell,n,r}=x$.

It remains to verify~\eqref{eq:JEMV}.
By~\cite[Lemma~2.9]{EMV21}, incorporating the strengthened expansion for $u(x,\theta)$, we find that
the diffeomorphism $f^j:Y_{j,n,r}\to X_n$ satisfies
\[
        \{\det Df^j\}^{-1}= 4^{-(j+1)} \Big(\frac{n}{j+n}\Big)^{\alpha+1}
\big(1+O(n^{-\kappa'})\big)
\]
where the implied constant is independent of $j$, $r$ and $n$ uniformly on $Y$.
(There is a factor of $4^j$ arising from the second coordinate of  $f^j$ and a factor of $4$ from the first iterate using the second branch $x\mapsto 4x-3$.)

By the construction in~\cite{EMV21}, $\lim_{n\to\infty}\dist(y_{j,n,r},\{x=\tfrac34\})=0$ uniformly in $j$ and $r$. Moreover
the one-sided limit $h(\tfrac34^+,\theta)$ exists for a.e.\ $\theta\in\T$
by~\cite[Proposition~4.11]{EMV21} and
\[
 \lim_{n\to\infty}\sum_{r=1}^{4^j}h(y_{j,n,r})4^{-j} = \int_\T h(\tfrac34^+,\theta)\,d\theta 
\]
uniformly in $j$.
Once again we find that $J_{j,n}\sim\alpha n^\alpha(j+n)^{-(\alpha+1)}$ as $n\to\infty$ uniformly in $j$ on $X_n$, verifying condition~\eqref{eq:J}.
\end{example}

\subsection{Parabolic rational maps of the complex plane}
\label{sec:parabolic}

\begin{example}
\label{ex:parabolic}
Let $\bC$ denote the Riemann sphere with the spherical metric.
We consider rational maps $f:\bC\to\bC$ with $d\ge1$ rationally indifferent periodic orbits $\xi_1,\dots,\xi_d$.
For simplicity, we suppose as usual that these periodic orbits are equally indifferent (see~\eqref{eq:parabolic} below).
A standard reference is~\cite{ADU93}.

Let $X=J$ denote the Julia set of $f$, with Hausdorff dimension $\cH(J)$.
By~\cite[Proof of Theorem~8.7]{ADU93}, there is a unique $\cH(J)$-conformal probability measure $m$ supported on $J$.
This means that $m(fA)=\int_A |f'|^{\cH(J)}dm$ for sets $A\subset J$ on which $A$ is injective.
By~\cite[Theorem~9.6]{ADU93}, there is a unique (up to scaling) $\sigma$-finite absolutely continuous $f$-invariant measure $\mu$ on $J$, and $\mu$ is equivalent to $m$. 
Moreover, $f:J\to J$ is conservative and exact.

Necessary and sufficient conditions for $\mu(J)=\infty$ are given in~\cite{ADU93}, see below.
For such maps, it is natural to define $g\in L^\infty$ to be a global observable if
there exists $\bar g\in\R$ such that
\[
\lim_{\eps\to0^+}\frac{\int_{J\setminus\bigcup_{k=1}^d B_\eps(\xi_k)}g\,d\mu}{
\mu({J\setminus\bigcup_{k=1}^d B_\eps(\xi_k)})}=\bar g.
\]

We prove:
\begin{thm} \label{thm:ADU}
Suppose that $f:\bC\to\bC$ is a parabolic rational map with $\mu(J)=\infty$.
Then $f$ is global-local mixing.
\end{thm}

Applying Proposition~\ref{prop:power}, we can suppose without loss of generality that all the indifferent periodic points of $f$ are equally indifferent fixed points $\xi_1,\dots,\xi_d$.
Then 
\begin{equation} \label{eq:parabolic}
f(z)=z+b_k(z-\xi_k)^{1+\beta}+O((z-\xi_k)^{2+\beta})
\quad\text{as $z\to\xi_k$, $\;k=1,\dots,d$,}
\end{equation} 
where $\beta\ge1$ is a positive integer and $b_k\neq0$, $k=1,\dots,d$.
By~\cite[Corollary~8.6 and Theorem~8.8]{ADU93}, $J$ has Hausdorff dimension 
$\cH(J)\in(\frac{\beta}{\beta+1},2)$.
By~\cite[Theorem~9.8]{ADU93}, $\mu(J)=\infty$ if and only if 
$\cH(J)\in(\frac{\beta}{\beta+1}, \frac{2\beta}{\beta+1}]$.
Focusing on the infinite measure case, we set
$\alpha=(1+1/\beta)\cH(J)-1\in(0,1]$.\footnote{Our $\alpha$ corresponds to $\alpha-1$ in~\cite{ADU93}.}

Choose $\eps>0$ so that $B_\eps(\xi_i)\cap fB_\eps(\xi_j)\neq\emptyset$ for all $i\neq j$.
There exists a finite measurable Markov partition $\cA$ of $\bC$ consisting of sets $A$ with $m(A)>0$ and $\diam A<\eps/3$ and such that 
the neutral fixed points $\xi_j$ lie in the interior of partition elements.
(See for example~\cite[Section~9]{ADU93}.)
This yields a finite Markov partition $\cP=\{A\cap J: A\in\cA\}$ for $f:J\to J$.

We define the inducing set $Y$ to be the union of all the partition elements $I\in\cP$ that intersect
$J\setminus\bigcup_{k=1}^d B_\eps(\xi_k)$.
By construction, 
\[
J\cap \bigcup_{k=1}^d B_{\eps/3}(\xi_k)\subset J\setminus Y
\subset\bigcup_{k=1}^d B_\eps(\xi_k).
\]

We can now define the first hit time $\tau:J\to\Z^+$ and the subsets
$Y_n\subset Y$ and $X_n\subset J\setminus Y$ as usual.
Take $q=d$ and set
$X_{n,p}=X_n\cap B_{\eps/3}(\xi_p)$ for $1\le p\le q$.
Write 
  $\bigcup_{\psi(r)=p}Y_{n,r}=Y_n\cap f^{-1}X_{n+1,p}$
and note that the number of preimages $|\psi^{-1}(p)|$ is bounded by $|\cP|-1$ for each $p$.
Hence we can take $\bbA$ to be a finite indexing set with $|\bbA|=\sum_{p=1}^q|\psi^{-1}(p)|$. 

It follows from the choice of $\eps$ that $X_{n,p}\subset B_{\eps/3}(\xi_p)$ for $n\ge1$, $p=1,\dots,q$, and that $f$ restricts to diffeomorphisms from
$X_{n+1,p}$ to $X_{n,p}$ and from
$Y_{n+1,r}$ to $X_{n,p}$ for all $r\in\psi^{-1}(p)$.
In particular the first return map $F=f^\tau:Y\to Y$ is a Markov map with finitely many images (namely $\{fX_{1,p}: 1\le p\le q\}\cup\{fI\in\cP:\tau|_I=1\}$).

The expansion~\eqref{eq:parabolic} holds on $\bigcup_{n\ge1}X_{n,p}$ for each $p$.
By~\cite[Proposition~8.3]{ADU93}, $\dist(X_{n,p},\xi_p)=O(n^{-1/\beta})$ for each $p$.
For $r\in\psi^{-1}(p)$, there is a unique $\eta_r$ such that $f\eta_r=\xi_p$, and
$\dist(Y_{n,r},\eta_r)=O(n^{-1/\beta})$.
Also, $F$ satisfies the usual bounded distortion estimates with respect to the conformal measure $m$ and hence $\mu$ (see~\cite{DenkerUrbanski91} for such estimates). 
Hence $F:Y\to Y$ is a Gibbs-Markov map. 

Repulsive (hyperbolic) periodic points are dense in $J$ so there is a repulsive periodic orbit lying in the union of the interiors of finitely many partition elements. By Proposition~\ref{prop:power}, we can suppose without loss of generality that this is a fixed point $x_0$. Now shrink $\eps$ so that $x_0\in Y$. Then $F$ is an aperiodic Gibbs-Markov map and hence is mixing.

The density $h=d\mu/dm$ is continuous on $Y$ with respect to the underlying metric (this follows from calculations in~\cite[Section~9]{ADU93}).

\begin{prop} \label{prop:m}
Conditions~\eqref{eq:Y} and~\eqref{eq:K} are satisfied.
\end{prop}

\begin{proof}
By~\cite[Lemma~8.9]{ADU93},
\[
m(X_{n+1,p})=m(f_p^{-n}X_{1,p})\sim m(X_{1,p})n^{-(\alpha+1)}.
\]
Let $r\in\bbA$ with $\psi(r)=p$; 
then $fY_{n,r}=X_{n-1,p}$.
Since $\dist (\eta_r,Y_{n,r})\to0$ as $n\to\infty$, it follows 
 from conformality of $m$ and continuity of $f'$ that
\[
m(X_{n-1,p})  =m(fY_{n,r})=\int_{Y_{n,r}}|f'|^{\cH(J)}dm
 \sim |f'(\eta_r)|^{\cH(J)} m(Y_{n,r}).
\]
Since the density $h$ is continuous,
\begin{equation} \label{eq:muY}
\mu(Y_{n,r})  =\int_{Y_{n,r}}h\,dm
 \sim h(\eta_r)m(Y_{n,r})\sim h(\eta_r) |f'(\eta_r)|^{-\cH(J)}m(X_{1,p}) n^{-(\alpha+1)}.
\end{equation} 
Hence condition~\eqref{eq:Y} is satisfied with
$\gamma_p=\alpha^{-1}m(X_{1,p})\sum_{\psi(r)=p}h(\eta_r) |f'(\eta_r)|^{-\cH(J)}$.

Condition~\eqref{eq:K} follows from~\eqref{eq:muY} and the fact that $F$ is a mixing Gibbs-Markov map by~\cite{Gouezel11,MT12}.
\end{proof}

\begin{prop} Condition~\eqref{eq:J} is satisfied.
\end{prop}

\begin{proof}
We verify the conditions in Proposition~\ref{prop:JJ}.
We already saw that condition~(a) is satisfied.

Fix $p=1,\dots,q$ and $r\in\bbA$ with $\psi(r)=p$.
For notational simplicity, suppose that $\xi_p=0$.
 Let $f_0$ be the branch of $f$ near $0$. 
For $z$ close enough to $0$, let $z_n=f_0^{-n}z$.
By~\cite[Proof of Theorem~8.4]{ADU93},
\[
t(z)^{-1}|z_n-z_{n+1}|\le |(f_0^{-n})'(z)|\,|z-z_1|\le t(z)|z_n-z_{n+1}|
\]
where $\lim_{z\to0}t(z)=1$.
Let $y\in Y_{n+\ell}$ with $u=f^\ell y\in X_n$.
Then 
\[
t(u)^{-1}\ \le \frac{|u_{\ell-1}-u_{\ell}|}{|u-u_1|}
|(f_0^{-(\ell-1)})'(u)|^{-1}\le t(u) .
\]

Now, 
\[
(f^\ell)'(y)=f'(y)\cdot (f_0^{\ell-1})'(fy)
=f'(y)\cdot  \{(f_0^{-(\ell-1)})'(u)\}^{-1}.
\]
Hence,
\[
t(u)^{-1} |f'(y)| \le \frac{|u_{\ell-1}-u_{\ell}|}{|u-u_1|} |(f^\ell)'(y)| \le t(u) |f'(y)| .
\]
Write $u=z_{n-1}$ where $z=f_0^{n-1}u\in X_1$. Then
\[
\frac{|u_{\ell-1}-u_{\ell}|}{|u-u_1|}= 
\frac{|z_{n+\ell-2}-z_{n+\ell-1}|}{|z_{n-1}-z_n|}
\to \Big(\frac{n}{n+\ell}\Bigr)^{1+1/\beta}
\]
as $n\to\infty$ uniformly in $\ell$ and uniformly in $z\in X_{1,p}$
 by~\cite[Proposition~8.3]{ADU93}.
This verifies conditions~(b) and~(c) of Proposition~\ref{prop:JJ} with
$B=1/\beta$ and $\omega_r=|f'(\tilde\xi_r)|$.
\end{proof}

\end{example}

 \paragraph{Acknowledgements}
DC was partially supported by the São Paulo Research Foundation (FAPESP) grant 2022/16259-2, and the Fundação Carlos Chagas Filho de Amparo à Pesquisa do Estado do Rio de Janeiro (FAPERJ) grant E-26/200.027/2025.

IM was partially supported by the São Paulo Research Foundation (FAPESP) grant 
2024/22093-5.

IM is grateful to IMPA and UFRJ for their hospitality during visits to Rio de Janeiro where much of this research was carried out.


\begin{thebibliography}{10}

\bibitem{Aaronson}
J.~Aaronson. \emph{{An Introduction to Infinite Ergodic Theory}}. Math. Surveys
  and Monographs \textbf{50}, Amer. Math. Soc., 1997.

\bibitem{ADU93}
J.~Aaronson, M.~Denker, and M.~Urba{\'n}ski. Ergodic theory for {M}arkov fibred
  systems and parabolic rational maps. \emph{Trans. Amer. Math. Soc.}
  \textbf{337} (1993) 495--548.

\bibitem{BonannoGiuliettiLenci18b}
C.~Bonanno, P.~Giulietti and M.~Lenci. Global-local mixing for the {B}oole
  map. \emph{Chaos Solitons Fractals} \textbf{111} (2018) 55--61.

\bibitem{BonannoGiuliettiLenci18}
C.~Bonanno, P.~Giulietti and M.~Lenci. Infinite mixing for one-dimensional
  maps with an indifferent fixed point. \emph{Nonlinearity} \textbf{31} (2018)
  5180--5213.

\bibitem{BonannoLenci21}
C.~Bonanno and M.~Lenci. Pomeau-{M}anneville maps are global-local mixing.
  \emph{Discrete Contin. Dyn. Syst.} \textbf{41} (2021) 1051--1069.

\bibitem{CanestrariLenci_arxiv}
G.~Canestrari and M.~Lenci. Uniformly global observables for 1D maps with an
  indifferent fixed point. Preprint, 2024.  https://arxiv.org/abs/2405.05948

\bibitem{CoaLuz2024}
D.~Coates, S.~Luzzatto. Persistent non-statistical dynamics in one-dimensional maps. 
	\emph{Comm. Math. Phys.} \textbf{405} (2024).
Paper No. 102, 34 pp.

\bibitem{CoaLuzMub2023}
D.~Coates, S.~Luzzatto, M.~Mubarak. Doubly intermittent full branch maps with critical points and singularities.
	\emph{Comm. Math. Phys.} \textbf{402} (2023) 1845--1878.

\bibitem{CMTsub}
D.~Coates, I.~Melbourne and A.~Talebi. Natural measures and statistical
  properties of non-statistical maps with multiple neutral fixed points.
  Preprint, 2024. arXiv:2407.07286

\bibitem{DenkerUrbanski91}
M.~Denker and M.~Urba\'{n}ski. Absolutely continuous invariant measures for
  expansive rational maps with rationally indifferent periodic points.
  \emph{Forum Math.} \textbf{3} (1991) 561--579.

\bibitem{DolgopyatNandori22}
D.~Dolgopyat and P.~N\'andori. Infinite measure mixing for some mechanical
  systems. \emph{Adv. Math.} \textbf{410} (2022) Paper No. 108757, 56.

\bibitem{EMV21}
P.~Eslami, I. Melbourne and S.~Vaienti. Sharp statistical properties for a
  family of multidimensional nonMarkovian nonconformal intermittent maps.
  \emph{Adv. Math.} \textbf{388} (2021) 107853.

\bibitem{GiuliettiHammerlindlRavotti22}
P.~Giulietti, A.~Hammerlindl and D.~Ravotti. Quantitative global-local mixing
  for accessible skew products. \emph{Ann. Henri Poincar\'e} \textbf{23}
  (2022) 923--971.

\bibitem{Gouezel11}
S.~Gou{\"e}zel. Correlation asymptotics from large deviations in dynamical
  systems with infinite measure. \emph{Colloq. Math.} \textbf{125} (2011)
  193--212.

\bibitem{HajianKakutani64}
A.~B. Hajian and S.~Kakutani. Weakly wandering sets and invariant measures.
  \emph{Trans. Amer. Math. Soc.} \textbf{110} (1964) 136--151.

\bibitem{KKM19}
A.~Korepanov, Z.~Kosloff and I.~Melbourne. Explicit coupling argument for
  nonuniformly hyperbolic transformations. \emph{Proc. Roy. Soc. Edinburgh A}
  \textbf{149} (2019) 101--130.

\bibitem{Krickeberg67}
K.~Krickeberg. Strong mixing properties of {M}arkov chains with infinite
  invariant measure. \emph{Proc. {F}ifth {B}erkeley {S}ympos. {M}ath.
  {S}tatist. and {P}robability ({B}erkeley, {C}alif., 1965/66), {V}ol. {II}:
  {C}ontributions to {P}robability {T}heory, {P}art 2}, Univ. California Press,
  Berkeley, Calif., 1967, pp.~431--446.

\bibitem{Lenci10}
M.~Lenci. On infinite-volume mixing. \emph{Comm. Math. Phys.} \textbf{298}
  (2010) 485--514.

\bibitem{Lenci16}
M.~Lenci. A simple proof of the exactness of expanding maps of the interval
  with an indifferent fixed point. \emph{Chaos Solitons Fractals} \textbf{82}
  (2016) 148--154.

\bibitem{LSV99}
C.~Liverani, B.~Saussol and S.~Vaienti. {A probabilistic approach to
  intermittency}. \emph{Ergodic Theory Dynam. Systems} \textbf{19} (1999)
  671--685.

\bibitem{Maharam64}
D.~Maharam. Incompressible transformations. \emph{Fund. Math.} \textbf{56}
  (1964) 35--50.

\bibitem{MT12}
I.~Melbourne and D.~Terhesiu. {Operator renewal theory and mixing rates for
  dynamical systems with infinite measure}. \emph{Invent. Math.} \textbf{189}
  (2012) 61--110.

\bibitem{MiernowskiNogueira13}
T.~Miernowski and A.~Nogueira. Exactness of the {E}uclidean algorithm and of
  the {R}auzy induction on the space of interval exchange transformations.
  \emph{Ergodic Theory Dynam. Systems} \textbf{33} (2013) 221--246.

\bibitem{PomeauManneville80}
Y.~Pomeau and P.~Manneville. Intermittent transition to turbulence in
  dissipative dynamical systems. \emph{Comm. Math. Phys.} \textbf{74} (1980)
  189--197.

\bibitem{Rychlik83}
M.~Rychlik. Bounded variation and invariant measures. \emph{Studia Math.}
  \textbf{76} (1983) 69--80.

\bibitem{Sera20}
T.~Sera. Functional limit theorem for occupation time processes of intermittent
  maps. \emph{Nonlinearity} \textbf{33} (2020) 1183--1217.

\bibitem{Terhesiu15}
D.~Terhesiu. {Improved mixing rates for infinite measure preserving systems}.
  \emph{Ergodic Theory Dynam. Systems} \textbf{35} (2015) 585--614.

\bibitem{Terhesiu16}
D.~Terhesiu. Mixing rates for intermittent maps of high exponent. \emph{Probab.
  Theory Related Fields} \textbf{166} (2016) 1025--1060.

\bibitem{Thaler80}
M.~Thaler. Estimates of the invariant densities of endomorphisms with
  indifferent fixed points. \emph{Israel J. Math.} \textbf{37} (1980)
  303--314.

\bibitem{Thaler83}
M.~Thaler. Transformations on {$[0,\,1]$} with infinite invariant measures.
  \emph{Israel J. Math.} \textbf{46} (1983) 67--96.

\bibitem{Thaler00}
M.~Thaler. The asymptotics of the {P}erron-{F}robenius operator of a class of
  interval maps preserving infinite measures. \emph{Studia Math.} \textbf{143}
  (2000) 103--119.

\bibitem{Young98}
L.-S. Young. Statistical properties of dynamical systems with some
  hyperbolicity. \emph{Ann. of Math.} \textbf{147} (1998) 585--650.

\bibitem{Zweimuller98}
R.~Zweim{\"u}ller. Ergodic structure and invariant densities of non-{M}arkovian
  interval maps with indifferent fixed points. \emph{Nonlinearity} \textbf{11}
  (1998) 1263--1276.

\bibitem{Zweimuller00}
R.~Zweim{\"u}ller. Ergodic properties of infinite measure-preserving interval
  maps with indifferent fixed points. \emph{Ergodic Theory Dynam. Systems}
  \textbf{20} (2000) 1519--1549.

\end{thebibliography}
\end{document}